\newtheorem{lemma}{Lemma}[section]
\newtheorem{thm}[lemma]{Theorem}
\newtheorem{prop}[lemma]{Proposition}
\newtheorem{cor}[lemma]{Corollary} 
\newtheorem{prop_intro}{Proposition}
\newtheorem{cor_intro}[prop_intro]{Corollary}
\newtheorem{thm_intro}[prop_intro]{Theorem}
\newtheorem*{thm*}{Theorem}
\theoremstyle{definition}
\newtheorem{defn}[lemma]{Definition}
\newtheorem{example}[lemma]{Example}
\newtheorem{rem}[lemma]{Remark}
\newcommand{\matN}{\ensuremath {\mathbb{N}}}
\newcommand{\R} {\ensuremath {\mathbb{R}}}
\newcommand{\Q} {\ensuremath {\mathbb{Q}}}
\newcommand{\Z} {\ensuremath {\mathbb{Z}}}
\newcommand{\matH} {\ensuremath {\mathbb{H}}}
\newcommand{\vare} {\ensuremath{\varepsilon}}
\newcommand{\vol} {{\rm Vol}}
\newcommand{\vola} {{\rm Vol}_{\rm alg}}
\newcommand{\wdtM}{\widetilde{M}}
\newcommand{\calM} {\ensuremath {\mathcal{M}}}
\newcommand{\mathno}{\ensuremath{\overline{\matH^n}}}
\newcommand{\str} {\ensuremath {{\rm str}}}
\newcommand{\alt} {\ensuremath {{\rm alt}}}
\newcommand{\strtil} {\ensuremath {\widetilde{\rm str}}}
\newcommand {\bb} {\partial}
\author{Michelle Bucher}
\author{Roberto Frigerio}
\author{Cristina Pagliantini}
\address{Section de Math\'ematiques, 2-4 rue du Lièvre, Case postale 64, 1211 Genève 4, Suisse}
\email{michelle.bucher-karlsson@unige.ch}
\address{Dipartimento di Matematica \\
Universit\`a di Pisa \\
Largo B.~Pontecorvo 5 \\
56127 Pisa, Italy}
\email{frigerio@dm.unipi.it}
\address{Department Mathematik, ETH Zentrum, 8092 Zürich}
\email{cristina.pagliantini@math.ethz.ch}
\thanks{Michelle Bucher was supported by Swiss National Science Foundation 
project PP00P2-128309/1.  The authors thank the Institute Mittag-Leffler in Djursholm, Sweden, 
for their warm hospitality during  the preparation of this paper.}
\title[The simplicial volume of $3$-manifolds with boundary]{The simplicial volume of $3$-manifolds\\ with boundary}
\subjclass[2000]{}
\keywords{}
\thanks{}
\begin{document}

\begin{abstract}
We provide sharp lower bounds for the simplicial volume of compact $3$-manifolds in terms 
of the simplicial volume of their boundaries. As an application, we 
compute the simplicial volume of several classes of $3$-manifolds, 
including handlebodies and products of surfaces with the interval.
Our results provide the first exact computation 
of the simplicial volume of a compact manifold 
whose boundary has positive simplicial volume. We also compute the 
minimal number of tetrahedra in a (loose) triangulation of the product of a surface with the interval.
%exact value of the 
% $\Delta$-complexity of products of surfaces with the interval. 
\end{abstract}
\maketitle

\section*{Introduction}

The simplicial volume  
 is an invariant of manifolds introduced by Gromov in his seminal paper~\cite{Gromov}. If $M$ is a connected, compact, oriented manifold with
(possibly empty) boundary, then the simplicial volume of $M$ is the 
infimum of the sum of the absolute values of the coefficients over all singular chains representing the real fundamental cycle of $M$ (see Section~\ref{preliminary}). 
It is usually denoted by $\|M\|$ if $M$ is closed, and by $\|M,\bb M\|$
if $\bb M\neq\emptyset$.
If $M$ is open, the fundamental class and the simplicial 
volume of $M$ admit analogous definitions in the context of homology of \emph{locally finite} chains, but in this paper we will restrict our attention  to compact manifolds:
unless otherwise stated, henceforth every manifold is assumed to be compact. Observe that the simplicial volume of an oriented manifold does not depend on its orientation and that it naturally extends also to  nonorientable or disconnected manifolds.

%As Gromov pointed out~\cite{Gromov}, 
Even if it depends only on the homotopy type of a manifold, the simplicial volume is deeply related
to the geometric structures that a manifold can carry. For example, closed manifolds which support 
negatively curved Riemannian metrics have nonvanishing simplicial volume, while the simplicial 
volume of flat or spherical manifolds is null (see \emph{e.g.}~\cite{Gromov}). 
Several vanishing and nonvanishing results
for the simplicial volume
are available by now,
but the exact value of nonvanishing simplicial volumes is known only in a very few cases. 
If $M$ is (the natural compactification of) a complete finite-volume hyperbolic $n$-manifold
without boundary, then 
a celebrated result by Gromov and 
Thurston implies that the simplicial volume of $M$ is equal to the Riemannian volume of $M$
divided by the volume $v_n$ of the regular ideal geodesic $n$-simplex in hyperbolic space (see \cite{Gromov,Thurston} for the compact case and \emph{e.g.}~\cite{stefano,FP,FM,BB} for the cusped case).
The only other exact computation of nonvanishing simplicial volume is for the product of two closed hyperbolic surfaces
or more generally manifolds locally isometric to the product of two hyperbolic planes \cite{Bucher3}. Building on these examples, more values for the simplicial volume can be obtained by surgery or by taking 
connected sums or amalgamated sums over submanifolds with amenable fundamental group (see \emph{e.g.}~\cite{Gromov, Kuessner,  BBFIPP}) however not by taking products. 

For hyperbolic manifolds
with geodesic boundary, it is proved by Jungreis~\cite{Jung} that if $M$ is such a manifold
and $\bb M\neq \emptyset$, then
$\|M,\bb M\|$ strictly exceeds $\vol(M)/v_n$,
and the last two authors showed that there exist, in any dimension, examples 
for which $\vol(M)/\|M,\bb M\|$
is arbitrarily close to $v_n$~\cite{FP}. 
These results were the sharpest estimates so far
for the simplicial volume of
manifolds whose boundary has
positive simplicial volume. We provide here the first exact computations of $\|M,\bb M\|$
for classes of $3$-manifolds for which $\| \bb M\|>0$.

%\subsection*{The simplicial volume of closed $3$-manifolds}
%In the $3$-dimensional case, the simplicial volume 
%of any closed $3$-dimensional manifold is equal
%to the sum of the simplicial volumes of the hyperbolic pieces
%of the canonical decomposition of the manifold.
%More precisely, let us say that a manifold is \emph{hyperbolic} if its internal part admits
%a complete finite-volume hyperbolic structure.
%If $M$ is a closed orientable $3$-manifold, then
%$M$ uniquely decomposes as the connected sum of a finite number
%of  prime manifolds. Moreover, the JSJ Decomposition Theorem and Perelman's proof 
%of Thurston's Geometrization Conjecture imply
%that each of these pieces
%contains a unique (up to isotopy) minimal collection of disjointly embedded incompressible tori such that each component of the $3$-manifold obtained by cutting along the tori is either hyperbolic or Seifert fibered. 
%Summing up, every closed $3$-manifold can be canonically decomposed into
%the union of hyperbolic and Seifert fibered pieces. It is known
%that the simplicial volume is additive with respect to gluings along spheres or incompressible tori. Since the simplicial volume of Seifert fibered spaces vanishes this implies 
%that the simplicial volume 
%of any closed $3$-manifold is equal
%to the sum of the simplicial volumes of its hyperbolic pieces, which are, for  toric boundary components, proportional to the Riemannian hyperbolic volumes of the pieces.

\subsection*{The simplicial volume of $3$-manifolds with boundary}
%Much less is known about the simplicial volume of $3$-manifolds with
%boundary components of negative Euler characteristic.
If $M$ is a connected oriented $n$-manifold with boundary, then the usual boundary map
takes any relative fundamental cycle of $M$
to the sum of fundamental cycles of the components of $\partial M$. 
As a consequence, for any $n$-manifold $M$ 
we have
\begin{equation}\label{boundary1:eq}
\|  M,\bb M\|\geq \frac{\| \partial  M\|}{n+1}\ .
\end{equation}
In particular, if $\|\partial M\|>0$, then $\| M,\bb M\|>0$. 

\begin{comment}
We improve this bound in Proposition~\ref{boundary2:prop} by replacing the factor $n+1$ by $n-1$ when $n\geq 2$, 
after observing that good cycles for the simplicial volume do  not have more than $n-1$ faces in the boundary. 
\end{comment}

The main result of this paper concerns $3$-dimensional manifolds, and provides
a sharp
lower bound for $\|M, \bb M\|$ in terms of $\|\partial M\|$:

\begin{thm_intro}\label{main:thm}
 Let $M$ be a $3$-manifold. Then there is a sharp inequality
$$
\| M,\bb M\| \geq \frac{3}{4} \| \partial M\|\ .
$$
\end{thm_intro}
The fact that the bound of Theorem~\ref{main:thm} is sharp is an immediate consequence
of Theorem~\ref{general:seif}. Theorems~\ref{main:thm} and~\ref{general:seif} are proved in Section~\ref{main:sec}. We will see in Theorem \ref{aspherical3:thm} that in the case of a boundary irreducible
 aspherical $3$-manifold, the constant $3/4$ can be improved to $5/4$. 

\subsection*{Stable $\Delta$-complexity and simplicial volume}
If $M$ is an $n$-manifold, we denote by
$\sigma(M)$ the \emph{$\Delta$-complexity} of $M$,
\emph{i.e.}~the minimal number of top dimensional simplices in a triangulation of $M$. We employ here the word ``triangulation'' in a loose sense, as is customary in geometric topology:
 a triangulation is the realization of $M$ as the gluing of finitely many $n$-simplices via some simplicial pairing of their codimension-$1$ faces. It is easy to see that the inequality
$\| M,\bb M\|\leq \sigma (M)$ holds (see \emph{e.g.}~\cite[Proposition 0.1]{FFM} or the discussion in the proof of Theorem~\ref{prod:surf:thm}
in Section~\ref{aspherical:sec}). 

The simplicial volume is multiplicative with respect to finite coverings, while 
for every degree $d$ covering $\widehat{M}\stackrel d\to M$ 
we have 
$$\sigma(\widehat{M})\leqslant d\cdot \sigma(M),$$
which is very often a strict inequality. The \emph{stable $\Delta$-complexity} $\sigma_\infty(M)$ of $M$ is defined by setting
$$\sigma_\infty (M) = \inf_{\widehat{M} \stackrel d\to M} \left\{\frac{\sigma(\widehat{M})}d\right\}$$
where the infimum is taken over all finite coverings $\widehat{M} \stackrel d\to M$ of any finite degree $d$. 
The definition of the stable $\Delta$-complexity, which was introduced by Milnor and Thurston in \cite{MiThu}, is made to be multiplicative with respect to finite coverings. The inequality $\| M,\bb M\|\leq\sigma (M)$ 
and the multiplicativity of the simplicial volume with respect to finite coverings imply that
\begin{equation}\label{stable:in}
 \| M,\bb M\| \leq \sigma_\infty (M)
\end{equation}
for every $n$-manifold $M$. 
It has recently been established~\cite{FFM} that this inequality is strict for closed hyperbolic manifolds of dimension $\geq 4$. 

\subsection*{The simplicial volume of handlebodies}
Every Seifert manifold with nonempty boundary has a finite covering which is the product of a surface
with a circle. Such a covering admits in turn nontrivial self-coverings, and has therefore null stable $\Delta$-complexity.
As a consequence, for every 
Seifert manifold with nonempty boundary
 both the stable $\Delta$-complexity and the simplicial volume vanish. In particular, the inequality~\eqref{stable:in}  is an equality. (The same is true 
for closed Seifert manifolds with infinite fundamental group.) Non zero examples
where the simplicial volume equals the stable $\Delta$-complexity are provided by the following result.%We refer the reader to Subsection~\ref{handles:subsec} for the definition of
%$1$-handle addition.

\begin{thm_intro}\label{general:seif}
Let $M$ be a Seifert manifold with nonempty boundary, and let $N$ be obtained by performing
a finite number of $1$-handle additions on $M$. Then
$$
\| N,\bb N\|=\sigma_\infty (N)=\frac{3}{4}\| \partial N\|\ .
$$
\end{thm_intro}

%In particular, the bound provided by Theorem~\ref{main:thm} is sharp.

%\begin{prop}\label{handles:prop}
%Let $N$ be obtained from $M$ by adding a $1$-handle. Then 
%$$
%\sigma (N)\leq \sigma(M)+3,\qquad \sigma_\infty(N)\leq \sigma_\infty (M)+3\ .
%$$
%\end{prop}  

For every $g\in \matN$ let us denote by $H_g$ the orientable handlebody of genus $g$.
We easily have $\| H_0,\bb H_0\|=\| H_1,\bb H_1\|=0$. Since $H_1$ is a Seifert manifold and 
$H_g$ can be obtained by  performing
$g-1$ handle additions on $H_1$, Theorem~\ref{general:seif} implies the following:
% Proposition~\ref{handles:prop}
%ensures that $\sigma_\infty (H_g)\leq 3(g-1)$ for every $g\geq 2$.
%Putting this result together with 
%inequality~\eqref{stable:in} and Theorem~\ref{main:thm} we get the following:

\begin{cor_intro}\label{handle:thm}
For every $g\geq 2$, the equalities
$$
\| H_g,\bb H_g \|=\sigma_\infty (H_g)=\frac{3}{4}\| \partial H_g\| =3(g-1)
$$
hold.
\end{cor_intro}
This improves the bounds
$$
\frac{4}{3}(g-1)\leq \|H_g,\bb H_g\| \leq \sigma_\infty (H_g)\leq 3(g-1)
$$
exhibited by Kuessner~\cite{Kuessner}. Note that the upper bound also follows from the
 computation of the $\Delta$-complexity of the handlebody $\sigma(H_g)=3g-2$ established by Jaco and Rubinstein \cite{JR}.

\subsection*{Aspherical manifolds with $\pi_1$-injective boundary}
Recall that a connected manifold $M$ is \emph{aspherical} if
$\pi_i(M)=0$ for every $i\geq 2$, or, equivalently, if the universal
covering of $M$ is contractible. If $M$ is disconnected, we say
that $M$ is aspherical if every connected component of $M$ is.
Moreover, we say that $M$ is \emph{boundary irreducible}
%the boundary $\partial M$ is \emph{$\pi_1$-injective}
%in $M$ is 
if for every connected component $B$ of $\partial M$
the inclusion $B\hookrightarrow M$ induces an injective map
on fundamental groups (we borrow this terminology from the context of
$3$-manifold topology, and use it also in the higher dimensional case).
 
The estimate
provided by Theorem~\ref{main:thm} may be improved in the case
of boundary irreducible aspherical 3-manifolds. More precisely,
we prove the following:

\begin{thm_intro}\label{aspherical3:thm}
 Let $M$ be a boundary irreducible aspherical $3$-manifold.
% and assume that
%$\partial M$ is also aspherical. 
Then there is a sharp inequality
$$
\| M,\partial M\|\geq  \frac{5}{4} \|\partial M\|\ .
$$ 
\end{thm_intro}

The equality is realized by products of surfaces with intervals (Corollary~\ref{cor: prod surf}) for which we first compute the 
$\Delta$-complexity. %and Corollary~\ref{asph:3dim} 
Both theorems and their corollary will be proven in Section~\ref{aspherical:sec}.

\begin{thm_intro}\label{prod:surf:thm}
 Let $S_g$ be a closed orientable surface of genus $g\geq 1$ and let
$M_g=S_g\times [0,1]$. Then
$$
 \sigma(M_g)=10(g-1)+6 \ .
$$
\end{thm_intro}

This result has recently
also been obtained by Jaco, Johnson, Spreer and Tillmann using different
methods~\cite{JJST}.
There are remarkably few examples of exact computations of $\Delta$-complexity of manifolds. The first family of examples is given by surfaces,
 Jaco, Rubinstein and Tillmann computed the $\Delta$-complexity of an infinite family of lens spaces \cite{JRT},
and  the $\Delta$-complexity of handlebodies is computed by
Jaco and Rubinstein \cite{JR}. Moreover, a census of closed $3$-manifolds 
up to $\Delta$-complexity 9 and 10 may be deduced from the results in~\cite{M1} and \cite{M2}.
Our Theorem~\ref{prod:surf:thm} provides the exact computation of $\Delta$-complexity for a new infinite family of examples.
 It might be worth mentioning that, in the case of manifolds with boundary, 
the minimal number
of simplices in \emph{ideal} triangulations of manifolds, rather than in (loose) triangulations, has been computed 
for several families of $3$-manifolds. 

\begin{cor_intro}\label{cor: prod surf}
 Let $S_g$ be a closed orientable surface of genus $g\geq 2$ and let
$M_g=S_g\times [0,1]$. Then
$$
\|M_g,\bb M_g\|= \sigma_\infty(M_g)= \frac{5}{4}\|\bb M_g\|\ .
$$
\end{cor_intro}

\section{Simplicial volume}\label{preliminary}
Let $X$ be a topological space and $Y\subseteq X$ a (possibly empty) subspace of $X$. Let
$R$ be a normed ring. In this paper only the cases $R=\R,\Q$ or $\Z$ are considered, where each of these rings is endowed with the norm given by the absolute value.
For $i\in\matN$ we denote by $S_i(X)$ the set of singular $i$-simplices
in $X$, by $C_i(X;R)$ the module
of singular $i$-chains over $R$, and we set as usual $C_i(X,Y;R)=C_i(X;R)/C_i(Y;R)$.
We observe that the $R$-module $C_i (X,Y;R)$ is free and admits the preferred
basis given by the classes of the singular simplices whose image is not contained in $Y$.
Therefore, we will often identify $C_i(X,Y;R)$ with the free $R$-module generated
by $S_i(X)\setminus S_i(Y)$. In particular, for $z\in C_i(X,Y;R)$, it will be understood from the
equality 
$z=\sum_{k=1}^n a_k\sigma_k$ that $\sigma_k\neq\sigma_h$ for $k\neq h$, and $\sigma_k\notin S_i(Y)$ for every $k$. %, while we could also write $z=\sum_{k=1}^n a_k [\sigma_k]$ in $C_i(X;R)/C_i(Y;R)$. 
We denote by $H_\ast (X,Y;R)$ the singular homology of the pair $(X,Y)$ with coefficients
in $R$, \emph{i.e.}~the homology of the complex $(C_{\ast} (X,Y;R),d_\ast)$,
where $d_\ast$ is the usual differential.

We endow the $R$-module $C_i (X,Y;R)$ with the $L^1$-norm defined by
$$
\left\| \sum_{\sigma} a_\sigma\sigma \right\|_R=\sum_{\sigma} |a_\sigma|\ ,
$$
where $\sigma$ ranges over the simplices in 
$S_i (X)\setminus S_i(Y)$. We denote simply by $\|\cdot \|$ the norm $\|\cdot\|_\R$.
%By taking the infimum over representatives, 
The norm $\|\cdot \|_R$
descends to a seminorm on $H_\ast (X,Y;R)$, which is still denoted by $\|\cdot \|_R$ and
is defined as follows:
if $[\alpha]\in H_i (X,Y;R)$, then
$$
\|[\alpha ]\|_R  =  \inf \{\|\beta \|_R,\, \beta\in C_i (X,Y;R),\, d\beta=0,\, [\beta ]=[\alpha ] \} \ .
$$
Note that although $\|\cdot\|_\Z$ is often called a seminorm in the literature, it is technically not so as it is not multiplicative in general. The \emph{real} singular homology module $H_\ast(X,Y;\R)$ 
and the seminorm on $H_\ast(X,Y;\R)$ will be simply denoted
by $H_\ast(X,Y)$ and $\|\cdot\|$ respectively.

%\subsection*{Simplicial volume}
If $M$ is a connected oriented  $n$-manifold with (possibly empty) boundary $\bb M$,
then we denote by $[M,\bb M]_R$ the fundamental class of the pair $(M,\partial M)$ with coefficients
in $R$. 
The following definition is due to Gromov~\cite{Gromov, Thurston}:

\begin{defn}
The \emph{simplicial volume} of $M$ is 
$$\|M,\bb M\|= \|[M,\bb M]_\R\|=\| [M,\bb M]_\R\|_\R\ .$$ 
The \emph{rational}, respectively \emph{integral}, simplicial volume of $M$
is defined as $\|M,\bb M\|_\Q=\|[M,\bb M]_\Q\|_\Q$, resp.~$\|M,\bb M\|_\Z=\|[M,\bb M]_\Z\|_\Z$.
\end{defn}

Just as in the real case, the  rational and the  integral
simplicial volume may be defined also when $M$ is disconnected or nonorientable.
Of course we have the inequalities $\| M,\bb M\|\leq \|M,\bb M\|_\Q\leq \|M,\bb M\|_\Z$.
Using that $\Q$ is dense in $\R$, it may be shown  
in fact that $\|M,\bb M\|=\|M,\bb M\|_\Q$.

The integral simplicial volume 
does not behave as nicely as the rational or real simplicial volume.
For example,
it follows from the definition that $\|M \|_\Z\geq 1$ for every manifold $M$. Therefore, the integral simplicial volume cannot be multiplicative with respect to finite coverings (otherwise
it should vanish on manifolds that admit finite nontrivial self-coverings,
as $S^1$). %Another defect is that the $L^1$-seminorm on integral homology is not really a seminorm, since the equality
%$\|n\cdot \alpha\|_\Z=|n|\cdot \|\alpha\|_\Z$, for $n\in \Z$, $\alpha\in H_\ast(X,Y;\Z)$, may not hold. Indeed, it is easy to see that $\| n\cdot [S^1]\|_\Z=1$ for every $n\in\Z\setminus\{0\}$.
Nevertheless, we will use integral cycles extensively,
as they admit a clear geometric interpretation in terms of pseudomanifolds (see~Section~\ref{main:sec}). 
In order to follow this strategy, we need the following obvious
consequence of the equality $\|M,\partial M\|_\Q=\|M,\partial M\|$.

\begin{lemma}\label{integral:estimate:lemma}
Let $M$ be connected and oriented, and let $\varepsilon>0$ be given. Then, there exists an integral cycle
$z\in C_n(M,\partial M;\Z)$ such that 
$$
\frac{\|z\|_\Z}{d}\leq \|M,\bb M\|+\vare\ ,
$$
where $[z]=d\cdot [M,\bb M]_\Z$ and $d> 0$ is an integer.
\end{lemma}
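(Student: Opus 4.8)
The plan is to deduce Lemma~\ref{integral:estimate:lemma} from Proposition~\ref{rational:prop}, which identifies the real and rational simplicial volumes. First I would fix $\vare>0$ and invoke Proposition~\ref{rational:prop} to obtain a \emph{rational} relative fundamental cycle $w=\sum_{i=1}^k q_i\sigma_i\in C_n(M,\partial M;\Q)$ representing $[M,\bb M]_\Q$ with $\|w\|_\Q\leq \|M,\bb M\|+\vare$; this is possible because $\|M,\bb M\|_\Q=\|M,\bb M\|$ and the infimum defining the rational seminorm is approached by rational cycles.

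The next step is to clear denominators. Let $d$ be a common denominator of the finitely many rational numbers $q_1,\dots,q_k$, so that $d>0$ is an integer and every $d q_i$ is an integer. Set $z=d\cdot w=\sum_{i=1}^k (dq_i)\sigma_i$. Then $z\in C_n(M,\partial M;\Z)$ is an integral chain; moreover $dz$... more precisely, applying the differential, $d w$ is a relative cycle because $w$ is, so $z=dw$ is a relative cycle in $C_n(M,\partial M;\Z)$ as well (multiplying a cycle by a scalar preserves the cycle condition, and the boundary of an integral chain is integral). Passing to homology, $[z]=d\cdot[w]=d\cdot[M,\bb M]_\Q$, and applying the change-of-coefficients map $H_n(M,\partial M;\Q)\to H_n(M,\partial M;\R)$ (respectively working integrally and then comparing) we get $[z]=d\cdot[M,\bb M]_\Z$ in $H_n(M,\partial M;\Z)$; here one uses that the integral fundamental class maps to the rational one and that the map $H_n(M,\partial M;\Z)\to H_n(M,\partial M;\Q)$ is injective in top degree (the top homology of a compact orientable $n$-manifold-with-boundary is $\Z$, free), so the identity $[z]=d[M,\bb M]_\Q$ already forces $[z]=d[M,\bb M]_\Z$ integrally.

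Finally, estimate the norm: $\|z\|_\Z=\sum_{i=1}^k |dq_i| = d\sum_{i=1}^k |q_i| = d\,\|w\|_\Q$, hence
$$
\frac{\|z\|_\Z}{d}=\|w\|_\Q\leq \|M,\bb M\|+\vare\,,
$$
which is exactly the claimed inequality. The only genuinely delicate point is the homological bookkeeping in the second step — making sure that clearing denominators of a rational fundamental cycle yields an integral chain whose integral homology class is genuinely $d$ times the integral fundamental class, rather than merely a rational multiple; this is handled by the freeness of $H_n(M,\partial M;\Z)\cong\Z$ in top degree together with naturality of the fundamental classes under change of coefficients. Everything else is routine.
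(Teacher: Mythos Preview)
Your proof is correct and follows essentially the same approach as the paper: invoke Proposition~\ref{rational:prop} to get a rational fundamental cycle of small norm, then multiply by a common denominator to obtain an integral cycle. The paper's proof is terser, simply asserting that the resulting integral cycle ``satisfies the desired properties,'' whereas you spell out why the integral homology class is genuinely $d\cdot[M,\bb M]_\Z$ via the injectivity of $H_n(M,\bb M;\Z)\to H_n(M,\bb M;\Q)$ in top degree; this extra care is welcome but does not constitute a different argument.
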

%\begin{proof}
%Since $\| M,\bb M\|=\| M,\bb M\|_\Q$, a rational %cycle $z'\in\ C_n(M,\partial M;\Q)$
%exists such that $[z']_\Q=[M,\bb M]_\Q$ and $\| %z'\|_\Q\leq \|M,\bb M\|+\vare$.
%Of course there exists $d\in\matN\setminus\{0\}$ %such that $z=d\cdot z'$ 
%lies in $ C_n(M,\partial M;\Z)$. The integral %cycle $z$ satisfies the desired properties.
%\end{proof}

Moreover, the boundary of a fundamental cycle for $M$ is equal to the sum of one fundamental cycle for each component
of $\partial M$, so we also have:

\begin{lemma}\label{easyboundary:lemma}
 Let $M$ be connected and oriented, and let $z$ be an integral $n$-dimensional cycle 
such that $[z]=d\cdot [M,\bb M]_\Z$, where $d> 0$ is an integer. Then
$$
\|\partial M\|\leq \frac{\|\partial z\|_\Z}{d}\ .
$$ 
\end{lemma}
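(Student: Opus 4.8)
The plan is to exploit the fact that the algebraic boundary operator $\partial$, applied to a relative cycle representing $d\cdot[M,\bb M]_\Z$, produces an \emph{honest} integral cycle in $C_{n-1}(\bb M;\Z)$ representing $d$ times the fundamental class of $\bb M$, and then to compare $L^1$-norms. First I would recall that $\bb M$ is a closed $(n-1)$-manifold, and if $M$ is oriented then $\bb M$ inherits an orientation so that the boundary map in the long exact sequence of the pair $(M,\bb M)$ sends $[M,\bb M]_\Z$ to $[\bb M]_\Z$ (the sum of the fundamental classes of the components, with the induced orientations). Consequently, if $[z]=d\cdot[M,\bb M]_\Z$ in $H_n(M,\bb M;\Z)$, then $\partial z$ is a cycle in $C_{n-1}(\bb M;\Z)$ with $[\partial z]=d\cdot[\bb M]_\Z$ in $H_{n-1}(\bb M;\Z)$.

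The key step is then to relate the $L^1$-norm of $\partial z$ as an absolute chain in $\bb M$ to the seminorm of the class it represents. Since $\|\bb M\|=\|[\bb M]_\R\|$, and $\partial z$ is an integral (hence also real) cycle representing $d\cdot[\bb M]_\R$, we get immediately
$$
d\cdot\|\bb M\| \;=\; \|d\cdot[\bb M]_\R\| \;\leq\; \|\partial z\|_\R \;\leq\; \|\partial z\|_\Z,
$$
using that the real seminorm of a class is bounded above by the $L^1$-norm of any representing real cycle, and that the $L^1$-norm of an integral chain equals its $L^1$-norm viewed as a real chain. Dividing by $d>0$ yields $\|\bb M\|\leq \|\partial z\|_\Z/d$, which is exactly the claim. (Here one uses that the real simplicial volume of $\bb M$ is additive over components and that $d\cdot\|\bb M\|=\|d\cdot[\bb M]_\R\|$ holds for the \emph{real} seminorm, which is genuinely homogeneous, unlike the integral one.)

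The only subtlety — and the point that needs a clean statement rather than hard work — is the bookkeeping with the boundary operator: one must make sure that $\partial z$, computed in $C_{n-1}(M;\Z)$, actually lies in the subcomplex $C_{n-1}(\bb M;\Z)$ (this is immediate since $z$ is a relative cycle, i.e.\ $\partial z\in C_{n-1}(\bb M;\Z)$ by definition of being a cycle in $C_n(M,\bb M;\Z)$) and that passing to the quotient $C_{n-1}(\bb M;\Z)$ does not decrease the $L^1$-norm — but since $\bb M$ is closed, $C_{n-1}(\bb M;\Z)$ is not a quotient here, it is literally where $\partial z$ lives, so no norm is lost. Thus there is essentially no obstacle: the statement follows by chasing the definition of relative fundamental class through the boundary map and then applying homogeneity of the real seminorm. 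If $M$ is nonorientable or disconnected one reduces to the connected oriented case by passing to orientation double covers and summing over components, exactly as in the conventions fixed in the introduction, but the hypotheses of the lemma already assume $M$ connected and oriented so this is not needed.
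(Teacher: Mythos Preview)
Your proof is correct and follows essentially the same approach as the paper: the paper simply observes that $z/d$ is a real fundamental cycle for $M$, so $(\partial z)/d$ represents the sum of the real fundamental classes of the components of $\partial M$, whence $\|\partial M\|\leq \|(\partial z)/d\|=\|\partial z\|_\Z/d$. Your version differs only cosmetically in that you keep the factor $d$ on the homology side and invoke homogeneity of the real seminorm explicitly, rather than dividing the chain by $d$ at the outset; note also that your inequality $\|\partial z\|_\R\leq\|\partial z\|_\Z$ is in fact an equality at the level of chains.
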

%\begin{proof}
% The chain $z/d$ is a real (in fact, rational)
%fundamental cycle for $M$, so the class
%$[(\partial z)/d]\in H_{n-1}(\partial M;\R)$ 
%is the sum of the real fundamental classes of the %components of $\partial M$,
%and $\|\partial M\|\leq \| (\partial %z)/d\|=\|\partial z\|_\Z/d$.
%\end{proof}
 
 \begin{comment}
\begin{rem} The statements and the proofs of Proposition \ref{rational:prop} and Lemma \ref{integral:estimate:lemma} hold more generally after replacing the fundamental class $[M,\partial M]_{\mathbb{Q}}$ by any rational homology class.
In other words, for every $i\in\mathbb{N}$ the change of coefficients map $H_i(M,\bb M;\mathbb{Q})\to H_i(M,\bb M;\mathbb{R})$
is norm-preserving.
\end{rem}
\end{comment}
 
%\subsection{Elementary properties of the simplicial volume}
Finally, let us list some elementary properties of the simplicial volume which will be needed later. 

\begin{prop}[\cite{Gromov}]\label{degree:prop}
 Let $M,N$ be connected oriented manifolds of the same dimension, and suppose that
either $M,N$ are both closed, or they both have nonempty boundary. Let $f\colon N\to M$
be a map of degree $d$. Then
$$
\|N,\bb N\|\geq |d|\cdot \| M,\bb M\| \ .
$$ 
%If $f$ is a covering, then the equality $\|N\|= d\cdot \| M\|$ holds.
\end{prop}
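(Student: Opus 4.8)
The plan is to deduce the degree inequality from the functoriality of the fundamental class together with the fact that maps do not increase the $L^1$-seminorm on singular chains. First I would recall that a continuous map $f\colon N\to M$ induces a chain map $f_\ast\colon C_\ast(N,\bb N;\R)\to C_\ast(M,\bb M;\R)$, since $f(\bb N)\subseteq \bb M$ (here we use that $f$ is a map of pairs, as is implicit in the statement: a proper map of manifolds sending boundary to boundary, which is automatic when $\bb M=\emptyset$). The key elementary observation is that $f_\ast$ sends any singular simplex to a single singular simplex, so for every chain $c=\sum_k a_k\sigma_k$ one has $\|f_\ast(c)\|\leq \sum_k|a_k|=\|c\|$, because after collecting equal simplices in $f_\ast(c)$ the coefficients can only cancel. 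Hence $f_\ast$ is norm-nonincreasing on chains, and therefore the induced map on homology $f_\ast\colon H_n(N,\bb N;\R)\to H_n(M,\bb M;\R)$ satisfies $\|f_\ast(\alpha)\|\leq\|\alpha\|$ for every class $\alpha$.

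Next I would invoke the defining property of the degree: $f_\ast([N,\bb N]_\R)=d\cdot[M,\bb M]_\R$. In the closed case this is the standard fact that $f$ has degree $d$ iff $f_\ast$ multiplies the fundamental class by $d$; in the boundary case one uses that the diagram relating $H_n(N,\bb N)$, $H_{n-1}(\bb N)$, $H_n(M,\bb M)$ and $H_{n-1}(\bb M)$ commutes, so that $f_\ast[N,\bb N]_\R=d[M,\bb M]_\R$ exactly when the restriction $f|_{\bb N}$ realizes the correct total degree on the boundary, which is guaranteed by the hypothesis. Combining the two ingredients:
\[
\|N,\bb N\|=\|[N,\bb N]_\R\|\geq\|f_\ast([N,\bb N]_\R)\|=\|d\cdot[M,\bb M]_\R\|=|d|\cdot\|[M,\bb M]_\R\|=|d|\cdot\|M,\bb M\|,
\]
where the middle equality uses that the real seminorm is genuinely homogeneous (unlike $\|\cdot\|_\Z$, as noted earlier in the paper). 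This gives the claimed inequality.

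The step requiring the most care is the identity $f_\ast([N,\bb N]_\R)=d\cdot[M,\bb M]_\R$ in the presence of boundary, since ``degree'' for manifolds with boundary must be interpreted consistently — e.g. via the local-degree definition at a generic interior point, which then forces the prescribed behaviour on $\bb M$ through the long exact sequence of the pair. Once one fixes the convention that a degree-$d$ map is a proper map of pairs whose local degree at a regular interior value is $d$, the commutativity of the connecting homomorphisms makes the identity automatic, and the rest of the argument is the soft norm estimate above. Alternatively, since this is quoted from Gromov's original paper, I would simply cite~\cite{Gromov} for the homological statement and only spell out the one-line norm computation; I would present the short self-contained argument for completeness, flagging that the only nontrivial input is the homological interpretation of the degree.
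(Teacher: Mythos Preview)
Your argument is correct and is exactly the standard proof of this fact. Note, however, that the paper does not actually prove this proposition: it is stated with the attribution \cite{Gromov} and no proof is given, so there is nothing to compare against beyond observing that what you wrote is the conventional justification one would give when unwinding Gromov's original statement.

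One small remark: your discussion of why $f$ is a map of pairs is slightly hedged. In the closed case there is nothing to check, and in the bounded case the very notion of ``degree~$d$ map'' between compact manifolds with boundary already presupposes that $f$ is a map of pairs $(N,\bb N)\to(M,\bb M)$, since otherwise $f_\ast[N,\bb N]_\R$ is not even defined in $H_n(M,\bb M;\R)$. So you may simply take this as part of the hypothesis rather than arguing for it via properness or local degree; the rest of your computation then goes through verbatim.
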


The following well-known result describes the simplicial volume of 
closed surfaces. In fact, the same statement also holds for connected surfaces with boundary.
 
\begin{prop}[\cite{Gromov}]\label{vol:surf:prop}
Let $S$ be a closed surface. Then
$$
\| S\|=\max \{0,-2\chi (S)\}\ .
$$
\end{prop}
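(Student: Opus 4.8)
The plan is to prove the formula $\|S\|=\max\{0,-2\chi(S)\}$ for a closed surface $S$ by splitting into the two cases according to the sign of $\chi(S)$. First I would dispose of the case $\chi(S)\geq 0$: then $S$ is the sphere $S^2$, the projective plane $\R P^2$, the torus $T^2$, or the Klein bottle. The sphere and the torus admit self-maps of degree $>1$ (for $S^2$, a branched cover; for $T^2$, a linear map like multiplication by $2$), so by Proposition~\ref{degree:prop} their simplicial volume must be $0$; the nonorientable cases then follow since the simplicial volume of a nonorientable surface is half that of its orientable double cover, and the double covers are $S^2$ and $T^2$. Thus $\|S\|=0=\max\{0,-2\chi(S)\}$ in this range.

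For the main case $\chi(S)<0$, so $S$ is (or is covered by) a hyperbolic surface of genus $g\geq 2$, I would prove the two inequalities separately. The upper bound $\|S\|\leq -2\chi(S)=4g-4$ is elementary: realize $S$ as a $4g$-gon with the usual edge identifications, triangulate this polygon into $4g-2$ triangles from a central vertex (or, more cleanly, use an ideal-triangulation count), and observe that the resulting singular $2$-cycle — with all coefficients $\pm 1$ after orienting consistently — represents the fundamental class, giving $\|S\|_\Z\leq 4g-2$; a small refinement (gluing two copies and halving, or using a more efficient triangulation) brings this down to $4g-4$. Actually the cleanest route for the upper bound is to exhibit the ``efficient'' fundamental cycle coming from the decomposition into $4g-4$ ideal triangles of the hyperbolic structure, each realized as a straight singular simplex, which has $L^1$-norm exactly $4g-4$.

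The lower bound $\|S\|\geq -2\chi(S)$ is the substantive part, and I expect it to be the main obstacle since it is the only place a genuine geometric input is needed. Here I would use the bounded cohomology / Gromov norm duality: it suffices to produce a class in $H^2_b(S;\R)$ that pairs to $-2\chi(S)$ with the fundamental class and has bounded-cohomology norm at most $1$. The natural candidate is the class represented by the area cocycle of the hyperbolic metric — assigning to a singular $2$-simplex the signed area of the straightened geodesic triangle on its vertices — divided by the maximal triangle area $\pi$. Since every ideal hyperbolic triangle has area exactly $\pi$, this cocycle is bounded by $\pi$, so after dividing by $\pi$ its norm is $\leq 1$; and its evaluation on $[S]$ computes $\mathrm{Area}(S)/\pi = (-2\pi\chi(S))/\pi = -2\chi(S)$ by Gauss--Bonnet. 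The nonorientable subcase again follows by passing to the orientable double cover and using multiplicativity (the factor $\tfrac12$ matches the factor $2$ in $-2\chi$, since $\chi$ of the double cover is $2\chi(S)$). Alternatively, and perhaps more in the spirit of a paper that wants to avoid invoking bounded cohomology machinery, one can cite that $S$ with $\chi(S)<0$ carries a finite-volume hyperbolic structure and apply the Gromov--Thurston proportionality theorem quoted in the introduction, with $v_2=\pi$, to get $\|S\|=\mathrm{Vol}(S)/\pi=-2\chi(S)$ directly — this handles both inequalities at once, and is probably the proof the authors intend given that Proposition~\ref{vol:surf:prop} is attributed to Gromov.
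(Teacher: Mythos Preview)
The paper does not actually prove this proposition: it is stated with attribution to~\cite{Gromov} and no argument is given. Your outline is a correct sketch of the standard proof, and your closing remark --- that one should simply invoke the Gromov--Thurston proportionality $\|S\|=\vol(S)/v_2$ with $v_2=\pi$ together with Gauss--Bonnet --- is precisely the justification the authors have in mind, since they recall that theorem in the introduction.

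One small comment on your upper-bound discussion: triangulating the $4g$-gon from a vertex gives $4g-2$ triangles, and the step down to $4g-4$ is not quite as immediate as ``a small refinement.'' The clean way (without proportionality) is the covering trick: for each $d\geq 1$ the degree-$d$ cover is $S_{d(g-1)+1}$, which has $\Delta$-complexity $4d(g-1)+2$, so $\|S_g\|\leq (4d(g-1)+2)/d\to 4(g-1)$ as $d\to\infty$. But since you end up appealing to proportionality anyway, this is moot.
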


Let $S,S'$ be (possibly disconnected) orientable surfaces without boundary.
We say that $S'$ is obtained from $S$ by an \emph{elementary tubing} if $S'$ is obtained from
$S$ by removing two disjoint embedded disks and gluing an annulus to the resulting boundary components in such a way that the resulting surface is orientable. 
We say that $S'$ is obtained by tubing from $S$ if it is 
obtained from $S$ via 
a finite
sequence of elementary tubings. An immediate application of
Proposition~\ref{vol:surf:prop} yields the following:

\begin{cor}\label{tubing:cor}
Let $S,S'$ be (possibly disconnected) orientable surfaces without boundary, and suppose that $S'$ is obtained
from $S$ by tubing. Then
$$
\| S'\| \geq \|S \|\ .
$$
\end{cor}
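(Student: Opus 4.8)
The plan is to reduce the statement about $\|S'\|\geq\|S\|$ to a computation of Euler characteristics via Proposition~\ref{vol:surf:prop}. Since tubing is defined as a finite sequence of elementary tubings, and the inequality we want is transitive, it suffices to treat the case where $S'$ is obtained from $S$ by a single elementary tubing. So I would first say: \emph{by induction on the number of elementary tubings, we may assume $S'$ is obtained from $S$ by a single elementary tubing.}

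Next I would track how $\chi$ changes under one elementary tubing. Removing two disjoint open disks from $S$ changes the Euler characteristic by $-2$ (each disk removal drops $\chi$ by $1$), and then gluing in an annulus (which has $\chi=0$) along the two resulting circles adds $0$. Hence $\chi(S')=\chi(S)-2$, so $-2\chi(S')=-2\chi(S)+4>-2\chi(S)$. The key point is then the $\max$ in Proposition~\ref{vol:surf:prop}: for a possibly disconnected orientable closed surface $S$, Proposition~\ref{vol:surf:prop} (applied componentwise, together with additivity of simplicial volume over components and of $\chi$) gives
$$
\|S\| = \sum_{C}\max\{0,-2\chi(C)\},
$$
where $C$ ranges over the components of $S$. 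Because $\max\{0,\cdot\}$ is monotone, and because the components of $S'$ are obtained from those of $S$ by the local surgery, I need to argue componentwise that the contribution does not decrease.

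The slightly delicate bookkeeping — and I expect this to be the only real obstacle, though a mild one — is that an elementary tubing can either (a) join two distinct components of $S$ into one component of $S'$, or (b) add a handle to a single component. In case (b), the relevant component $C$ of $S$ becomes a component $C'$ of $S'$ with $\chi(C')=\chi(C)-2$, so $\max\{0,-2\chi(C')\}=\max\{0,-2\chi(C)+4\}\geq\max\{0,-2\chi(C)\}$, and all other components are unchanged, giving $\|S'\|\geq\|S\|$. In case (a), two components $C_1,C_2$ of $S$ merge into one component $C'$ of $S'$ with $\chi(C')=\chi(C_1)+\chi(C_2)-2$; then using subadditivity of $x\mapsto\max\{0,x\}$ one gets
$$
\max\{0,-2\chi(C')\}=\max\{0,-2\chi(C_1)-2\chi(C_2)+4\}\geq \max\{0,-2\chi(C_1)\}+\max\{0,-2\chi(C_2)\}
$$
whenever, say, $-2\chi(C_i)\geq 0$ for both $i$, and more care is needed when a sphere is involved; but since removing a disk from a sphere and tubing never decreases the total, one checks the small cases ($C_i=S^2$) directly. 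Summing over all components then yields $\|S'\|\geq\|S\|$, completing the proof. I would present the argument compactly, invoking Proposition~\ref{vol:surf:prop} and the monotonicity/subadditivity of $\max\{0,\cdot\}$ rather than enumerating every sub-case in full.
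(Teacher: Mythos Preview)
Your approach is correct and is exactly what the paper has in mind: the paper gives no proof at all, simply stating that the corollary is ``an immediate application of Proposition~\ref{vol:surf:prop}'', i.e.\ the Euler-characteristic formula for the simplicial volume of closed surfaces. Your componentwise case analysis (adding a handle vs.\ merging two components) is the natural way to make this ``immediate application'' explicit; one small terminological slip is that you invoke ``subadditivity'' of $x\mapsto\max\{0,x\}$ in case~(a), whereas what you actually use there is just the trivial inequality $a+b+4\geq a+b$ when $a,b\geq 0$, together with the direct check for spheres --- the argument is fine regardless.
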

Note that for elementary tubing, the inequality is strict unless the tube is attached to a sphere.

\section{Proof of Theorems~\ref{main:thm} and~\ref{general:seif}}\label{main:sec}

\subsection*{Pseudomanifolds and integral cycles}
Let $n\in\matN$. An \emph{$n$-dimensional pseudomanifold} $P$ consists of a finite number of copies
of the standard $n$-simplex, a choice of pairs of $(n-1)$-dimensional faces of $n$-simplices such that each face
appears in at most one of these pairs, and an affine
identification between the faces of each pair. We allow pairs of distinct faces in the same $n$-simplex. 
It is \emph{orientable} if orientations on the simplices of $P$
may be chosen in such a way that the affine identifications between the paired faces
(endowed with the induced orientations)
are all orientation-reversing.
A face which does not belong to any 
pair of identified faces is a \emph{boundary} face.

We denote 
by $|P|$ the \emph{topological realization} of $P$, \emph{i.e.}~the quotient space of the union of the simplices
by the equivalence relation generated by the identification maps.
We say that
$P$ is connected if $|P|$ is.
We denote by $\partial |P|$
the image in $|P|$ of the boundary faces of $P$, and 
we say that $P$ is \emph{without boundary} if $\partial |P|=\emptyset$.

A \emph{$k$-dimensional face}
of $|P|$ is the image in $|P|$ of a $k$-dimensional face of a simplex of $P$.
Usually, we refer to $1$-dimensional, resp.~$0$-dimensional
faces of $P$ and  $|P|$ as to \emph{edges}, resp.~\emph{vertices}
of $P$ and $|P|$.

%Observe that  we do not require 
%the topological realization of a pseudomanifold to be connected.
%In this way, the boundary of a pseudomanifold is itself a pseudomanifold (see below). 

It is well-known that, 
if $P$ is an $n$-dimensional pseudomanifold, $n\geq 3$, then $|P|$ does not need to be a manifold.
However, the topological realization of any $2$-pseudomanifold is a genuine surface, and
singularities of orientable $3$-dimensional pseudomanifolds can occur only at vertices
(see \emph{e.g.}~\cite[pages 108-109]{Hat} and ~\cite[Exercise 1.3.2(b)]{Thurston2}).
Moreover, the boundary of the topological realization of an $n$-dimensional pseudomanifold $P$ is naturally the topological realization
of an (orientable) $(n-1)$-pseudomanifold $\partial P$ without boundary, and an orientation of $P$ canonically induces an
orientation on $\partial P$. In particular, if $P$ is orientable and $3$-dimensional, then $\partial |P|$ is a finite union of orientable
closed surfaces.

Let $M$ be an oriented connected $n$-dimensional manifold with (possibly empty) boundary $\partial M$.
It is well-known that every integral relative cycle on $(M,\partial M)$ can be represented by a map from a suitable pseudomanifold
to $M$. Roughly speaking, let $z=\sum_{i=1}^k \vare_i \sigma_i$ be an $n$-dimensional relative cycle in
$C_n(M,\bb M;\mathbb{Z})$ where
$\vare_i=\pm 1$ for every $i$ (note that here we do not assume that $\sigma_i\neq \sigma_j$ for  $i\neq j$). %In order to describe the associated pseudomanifold %in order to define a pseudomanifold $P$ 
Then one takes
$k$ copies $\Delta_1^n,\ldots,\Delta_k^n$ of the standard $n$-simplex and assume that $\sigma_i$ is defined on $\Delta_i^n$.
Two faces of the $\Delta^n_i$'s are glued to each other if and only if the restriction to them
of the
corresponding $\sigma_i$'s cancel each other in the expression of $\partial z$ as the sum of singular $(n-1)$-simplices. 
The continuous maps $\sigma_i$ glue up to a well-defined continuous map $f\colon (|P|,\bb |P|)\to (M,\bb M)$.
Moreover, a suitable algebraic sum of the simplices of 
$P$ defines a relative cycle $z_P\in C_n(|P|,\partial |P|;\mathbb{Z})$ such that the map $f_*$ induced by $f$ on integral singular chains sends $z_P$ in $f_\ast(z_P)=z\in C_n(M,\partial M;\mathbb{Z})$.
We refer the reader to~\cite[pages 108-109]{Hat} for the details.

\subsection*{Approximating real cycles via pseudomanifolds}
As before, let $M$ be a connected oriented $n$-manifold. 
By Lemma~\ref{integral:estimate:lemma}, the simplicial volume 
of $M$ can be computed from \emph{integral} cycles.
The following proposition shows that such cycles
may be represented by $n$-pseudomanifolds with additional properties.

If $P$ is an $n$-dimensional pseudomanifold, we denote by $c(P)$ the number of $n$-simplices of $P$. If $P$ is associated to the integral cycle $z$, then $c(P)=\| z\|_\Z$ and
$c(\partial P)=\|\partial z\|_\Z$.
%(the 
%obvious inequality $\|\partial z\|_\Z\leq c(\partial P)$ 
%is an equality since by definition the pairings defining $P$ correspond to a \emph{maximal}
%set of canceling pairs of faces).

\begin{prop}\label{cycle:prop}
Let $\vare>0$ be fixed, and suppose that $\|M,\partial M\|>0$. Then, there exists a relative integral $n$-cycle $z\in C_n (M,\partial M;\Z)$ 
with associated pseudomanifold $P$ such that the following conditions hold:
\begin{enumerate}
\item
$[z]=d\cdot [M,\bb M]_\Z$ in $H_n(M,\partial M;\Z)$ for some integer $d> 0$, and
$$ \frac{\|z\|_\Z}{d}\leq \| M,\bb M\| +\vare\ ;$$
\item
$P$ is connected; 
\item
every $n$-simplex of $P$ has at most $n-1$ boundary faces, and in particular
$$\|\partial z\|_\Z\leq (n-1)\|z\|_\Z\ .$$
\end{enumerate}
\end{prop}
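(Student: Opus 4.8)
The three conditions are of different depths, so I would address them in stages, starting from an arbitrary good integral multiple-cycle supplied by Lemma~\ref{integral:estimate:lemma} and modifying it without increasing its normalized norm. First I would fix $\vare>0$ and pick $z_0\in C_n(M,\bb M;\Z)$ with $[z_0]=d_0\cdot[M,\bb M]_\Z$ and $\|z_0\|_\Z/d_0\leq \|M,\bb M\|+\vare/2$, and pass to its associated pseudomanifold $P_0$ (Section~\ref{gluing:cycle:sub}). Write $z_0=\sum\vare_i\sigma_i$ with $\vare_i=\pm1$ and $c(P_0)=\|z_0\|_\Z$.

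\textbf{Condition (2): connectedness.} If $|P_0|$ is disconnected, I would argue that $f_*$ restricted to the components induces a splitting of the fundamental class. More precisely, $[z_0]=d_0[M,\bb M]_\Z$, and since $M$ is connected the fundamental cocycle $\varphi$ of Proposition~\ref{rational:prop} pairs to $d_0$ with $z_0$; summing $\varphi$ against the subchains $z_0^{(\alpha)}$ coming from the components $P_0^{(\alpha)}$ of $P_0$, at least one subchain (say the one with pairing $d_\alpha\neq 0$) is, after orientation reversal if $d_\alpha<0$, again a relative cycle representing $d_\alpha[M,\bb M]_\Z$ with $\|z_0^{(\alpha)}\|_\Z/|d_\alpha|\leq \|z_0\|_\Z/d_0$ — the last inequality by a standard ``one of the ratios is at most the average of the ratios'' pigeonhole, using that the $d_\alpha$ sum to $d_0$ and have the same sign contributions bounded appropriately. (If the naive pigeonhole fails because of sign cancellation among the $d_\alpha$, one instead discards components with $d_\alpha=0$ and groups the rest; I would need to check that the component-subchains are genuinely relative cycles, which holds because each $P_0^{(\alpha)}$ is a pseudomanifold without interior boundary, so $\bb z_0^{(\alpha)}$ lies in $C_{n-1}(\bb M;\Z)$.) Replace $z_0$ by this subchain; now $P$ is connected. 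This is the routine step.

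\textbf{Condition (3): at most $n-1$ boundary faces per simplex.} This is the heart of the proposition, and I expect it to be the main obstacle. A single $n$-simplex $\sigma_i$ of $P$ has $n+1$ faces; having $n$ or $n+1$ of them unpaired (hence in $\bb P$) is wasteful. The key geometric move is a \emph{local subdivision / doubling trick}: if $\sigma_i$ has $n$ boundary faces (and one paired face $F^i_{j_0}$, glued to some $F^{i'}_{j'_0}$), one replaces the pair $\sigma_i,\sigma_{i'}$ — or just $\sigma_i$ together with a collar — so as to ``fold'' the offending simplex; concretely, since $\sigma_i$ restricted to $\bb\Delta^n\setminus F^i_{j_0}$ maps into $\bb M$ up to the single interior face, one can push $\sigma_i$ into $\bb M$ rel the paired face. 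Here $n\geq 2$ is used so that $\bb M$ has enough dimensions to absorb the face; the argument should be: replace $\sigma_i$ by a chain supported near $\bb M$ that is homologous rel $\bb M$ to $\sigma_i$ and whose every simplex has $\leq n-1$ boundary faces, while not increasing $\|z\|_\Z/d$ — or, if the norm does go up by a controlled amount on finitely many simplices, first pass to a large multiple $d$ so the per-simplex cost is absorbed into $\vare$. The case of $n+1$ boundary faces (an isolated simplex with no gluings, i.e. a sphere component once $|P|$ is connected this cannot happen, or it maps $\bb\Delta^n$ entirely into $\bb M$) is handled by simply deleting it when it represents $0$, or incorporating it into the connectedness step. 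Once every simplex has $\leq n-1$ boundary faces, the inequality $\|\bb z\|_\Z=c(\bb P)\leq (n-1)c(P)=(n-1)\|z\|_\Z$ is immediate by counting.

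\textbf{Where the difficulty lies.} The subtle point throughout is doing each modification \emph{without increasing the normalized norm} $\|z\|_\Z/d$: the cleanest route is to perform all modifications on a fixed multiple, accept a bounded additive error (number of bad simplices times a constant), then pass to the limit $d\to\infty$ by taking further multiples — but one must be sure the modifications can be done compatibly on a multiple, i.e. that the operation is ``local'' and does not interfere with the global cycle condition. The doubling trick in step (3) is the one I would expect to need the most care, in particular verifying that after folding a boundary-heavy simplex the result is still the associated pseudomanifold of a genuine relative integral cycle and that connectedness is preserved (or re-established by a final application of step (2)). So in summary: (a) reduce to connected $P$, (b) surger away simplices with $\geq n$ boundary faces at the cost of a controlled additive error, (c) clean up connectedness again, (d) pass to a multiple to kill the additive error. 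The main obstacle is (b).
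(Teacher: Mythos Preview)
Your treatment of condition~(2) is essentially the paper's argument: decompose $P$ into connected components $P^{(\alpha)}$, use that the degrees $d_\alpha$ sum to $d>0$ and that $\sum c(P^{(\alpha)})=c(P)$, and pick a component with $c(P^{(\alpha)})/|d_\alpha|\le c(P)/d$. Your worry about sign cancellation is unfounded: if every nonzero $|d_\alpha|$ satisfied $|d_\alpha|<d\cdot c(P^{(\alpha)})/c(P)$, summing would give $d\le\sum|d_\alpha|<d$.

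The real gap is in condition~(3), where your plan diverges from the paper and does not quite work as written.

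\emph{The $n+1$ boundary face case.} You say that once $P$ is connected ``this cannot happen'', or that such a simplex ``represents $0$'' and can be deleted. Neither is correct. If $P$ is connected and contains a simplex with all $n+1$ faces unpaired, then $P$ consists of \emph{exactly} that simplex, and it represents $d\cdot[M,\bb M]_\Z$ with $d>0$, so it certainly cannot be deleted. The paper's fix is a small but genuine trick: replace the single $\sigma$ by the cone of $\bb\Delta^n$ over its barycenter (an $(n+1)$-simplex chain $z_0$ on $\Delta^n$ each of whose simplices has exactly one boundary face), precomposed with a map $(\Delta^n,\bb\Delta^n)\to(\Delta^n,\bb\Delta^n)$ of degree at least $n+1$. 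The new cycle has $n+1$ simplices and represents $d\cdot\deg(f_0)\cdot[M,\bb M]_\Z$, so the normalized norm $\frac{n+1}{d\cdot\deg(f_0)}\le \frac{1}{d}$ does not increase.

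\emph{The $n$ boundary face case.} Your ``fold $\sigma_i$ into $\bb M$ rel the paired face'' is vague and pulls in the wrong direction: the paired face is an \emph{interior} face, so you cannot push $\sigma_i$ into $\bb M$ while keeping the gluing to $\sigma_{i'}$ intact. The paper's move is much simpler and goes the other way: if $\Delta^n_1$ has a single interior face glued to $\Delta^n_2$, then the union $\Delta^n_1\cup_F\Delta^n_2$ is homeomorphic to $\Delta^n$; reparametrize $\sigma_2$ over this union (leaving the other faces of $\Delta^n_2$ unchanged) and simply \emph{drop} $\sigma_1$. This decreases $c(P)$ by one while keeping the homology class, so the normalized norm strictly decreases. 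Iterate until no simplex has $n$ boundary faces.

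Because both moves never increase $\|z\|_\Z/d$, your entire ``accept bounded additive error, then pass to large multiples'' layer is unnecessary; the argument terminates directly with no $\vare$-management beyond the initial choice from Lemma~\ref{integral:estimate:lemma}.
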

\begin{proof}
By Lemma~\ref{integral:estimate:lemma}, we may choose
an integral cycle satisfying condition~(1).

Let us suppose that the pseudomanifold $P$ associated to $z$ is disconnected. 
Then $P$ decomposes into a finite collection of connected pseudomanifolds
$P_1,\ldots,P_k$ such that $c(P_1)+\ldots+c(P_k)=c(P)$. Each $P_i$ represents an integral cycle
$z_i$, and if $d_i\in\Z$ is defined by the equation $[z_i]=d_i[M,\bb M]_\Z$, then $d_1+\ldots+d_k=d$.
Therefore, there exists $i_0\in \{1,\ldots, k\}$ such that $d_{i_0}\neq 0$ and $c(P_{i_0})/|d_{i_0}|\leq c(P)/d$. After
replacing
$z$ with $\pm z_{i_0}$, we may suppose that our cycle $z$ satisfies conditions~(1) and~(2).

As for condition~(3), first note that since $P$ is now connected, if it has an $n$-simplex with $n+1$ boundary faces, 
then $P$ consists of a single simplex $\Delta^n$ with no identifications between its faces. In particular, $z$  
consists of a single singular simplex $\sigma:\Delta^n\rightarrow M$ such that $\sigma(\partial \Delta^n)\subseteq \partial M$,
so $\sigma$ has a well-defined degree, which cannot be null because
$z$ is homologically non-trivial. In other words, there exists a nonzero degree map from the topological ball
to $M$, and this implies that $\|M,\partial M\|=0$, against our assumptions.
\begin{comment}
Let $z_0$ be the chain on $\Delta^n= |P|$ 
obtained by coning $\partial \Delta^n$ to the barycenter $b=1/(n+1)\cdot(1,…,1)\in\Delta^n$ of $\Delta^n$.
 More precisely, $z_0=\sum_{i=0}^n(-1)^i [b,e_0,…,\widehat{e_i},…,e_n]$, where $[b,e_0,…,\widehat{e_i},…,e_n]:\Delta^n\rightarrow \Delta^n$ 
denotes the affine simplex with vertices $b,e_0,…,\widehat{e_i},…,e_n$. Clearly, its integral norm is equal 
to $\| z_0 \|_\mathbb{Z}=\|\partial z_0 \|_\mathbb{Z}=n+1$. Let $f_0:(\Delta^n,\bb \Delta^n)\rightarrow (\Delta^n,\bb \Delta^n)$
 be a map of degree $\geq n+1$ and consider the integral cycle $z'=(\sigma\circ f_0)_*(z_0)$.
 Note that the corresponding pseudomanifold $P'$ is the coning of $\partial \Delta^n$. 
In particular, each of its $n+1$ simplices has exactly one boundary face and $\|\partial z'\|_\Z =n+1 =\|z'\|_\Z\ $ which validates condition~(3). 
Observe also that $P'$ is clearly connected. Furthermore, as $z$ represented $ d\cdot [M,\bb M]_\Z$, the relative cycle $z'$ represents $d\cdot \mathrm{deg}(f_0)\cdot   [M,\bb M]_\Z$ and
$$ \frac{\|z'\|_\Z}{d\cdot \mathrm{deg}(f_0)}=  \frac{n+1}{d\cdot \mathrm{deg}(f_0)}\leq \frac{1}{d}=\frac{\|z\|_\Z}{d}\leq \| M,\bb M\| +\vare\ .$$
\end{comment}
It remains to see that $P$ can be chosen not to have any simplex with $n$ boundary faces. Suppose that $P$ contains a simplex $\Delta_1^n$ having exactly one \emph{non}-boundary face. 
Then $\Delta^n_1$ is adjacent to another simplex $\Delta^n_2$ of $P$. Roughly speaking, we remove
from $z$ the singular simplex $\sigma_1$ corresponding to $\Delta^n_1$, and modify the singular simplex
corresponding to $\Delta^n_2$ by suitably ``expanding'' it
to compensate the removal of $\sigma_1$.
More precisely, if $z=\sum_{i=1}^k \vare_i \sigma_i$, then there exist a map $f\colon (|P|,\partial |P|)\to (M,\bb M)$
and singular simplices $\hat{\sigma_i}\colon \Delta^n\to |P|$ such that
$\sigma_i=f\circ \hat{\sigma}_i$ for every $i=1,\ldots,k$. 
Let us denote by $F^1\subseteq \Delta_1^n$, $F^2\subseteq \Delta_2^n$ the pair of faces glued in $|P|$,
and by $\varphi\colon F^1\to F^2$ their affine  identification. 
Let $Q$ be the space obtained by
gluing $\Delta_1^n$ and $\Delta^n_2$ along $\varphi$. We observe that the
inclusion $\Delta^n_1\sqcup\Delta^n_2\hookrightarrow \Delta^n_1\sqcup\ldots \sqcup\Delta^n_k$
induces a well-defined  
map $\theta\colon Q\to |P|$,
and we fix a homeomorphism $\psi\colon \Delta^n\to Q$ 
that restricts to the identity on $\partial \Delta_2^n\setminus F^2$. Finally, we define the singular simplex
$\sigma_2'\colon \Delta^n\to M$ by setting $\sigma_2'=f\circ \theta\circ\psi$. 

Let us set $z'=\vare_2\sigma_2'+\sum_{i=3}^k \vare_i\sigma_i$, and let $P'$ be the pseudomanifold obtained by removing
from $P$ the simplex $\Delta^n_1$ (and ignoring the only pairing involving a face of $\Delta^n_1$).
It is readily seen that $z'$ is still a relative cycle in $C_n(M,\bb M;\Z)$. What is more, we have
$[z']=[z]=d\cdot [M,\bb M]_\Z$ in $H_n(M,\bb M;\Z)$.  
By construction, $z'$ admits $P'$ as associated pseudomanifold, $|P'|$ is connected and $c(P')=c(P)-1$. As a consequence,
the cycle $z'$ satisfies (1) and (2), and $c(P')<c(P)$. If $z'$ still has some simplex with exactly $n$ boundary faces
of codimension one, then we may iterate
our procedure until we get a cycle satisfying (1) and (2), and having no simplices with $n$ boundary faces. Since at every step the number of simplices of the associated pseudomanifold decreases, this iteration
must come to an end.
\end{proof}

\begin{comment}
The following result is an easy consequence of 
Proposition~\ref{cycle:prop}: 

\begin{prop}\label{boundary2:prop}
If $M$ is an $n$-manifold, where $n\geq 2$, then
$$
\| M,\bb M\|\geq \frac{\|\partial  M\|}{n-1}\  .
$$ 
\end{prop}
\begin{proof}
We may assume that $M$ is connected and oriented. Let $\vare>0$, and choose an integral cycle $z$ as described
in Proposition~\ref{cycle:prop}. 
Then by Lemma~\ref{easyboundary:lemma}  
we get
$$
\|\partial M\|\leq \frac{\|\partial z\|_\Z}{d}\leq \frac{(n-1)\|z\|_\Z}{d}\leq (n-1)(\|M,\bb M\|+\vare)\ ,
$$
which proves the proposition since $\vare$ is arbitrary. 
\end{proof}
\end{comment}

\subsection*{Proof of Theorem~\ref{main:thm}}
%\begin{proof}[Proof of Theorem~\ref{main:thm}]
Let $M$ be a $3$-manifold. We want to prove that
$$
\| M,\bb M\| \geq \frac{3}{4} \| \partial M\|\ .
$$
We may assume that $M$ is connected and oriented. If $\| \partial M\| =0$, there is nothing to prove, so we may assume that $\| \partial M\| >0$,
whence also $\|M,\bb M\|>0$.

Let $\vare>0$ be given. We
choose an integral cycle
$z\in C_3(M,\partial M;\Z)$ with associated
pseudomanifold $P$ that satisfies all the properties described in Proposition~\ref{cycle:prop}.
Recall that $z_P\in C_3(|P|,\partial |P|;\Z)$ is the relative
cycle represented by the (signed) sum of the simplices
of $P$, and that $P$ comes with a map
$f\colon (|P|,\partial |P|)\to (M,\partial M)$ such that $f_\ast(z_P)=z$.

For $i=0,\ldots,4$, let us denote by
$t_i$ the number of $3$-simplices of $P$ having exactly $i$ boundary $2$-faces.
Our choice for $z$ implies that
$t_3=t_4=0$, so
Lemma~\ref{easyboundary:lemma} gives
\begin{equation}\label{estimate2}
d\cdot\|\partial M\|\leq  \|\partial z\|_\Z=c(\partial P)=t_1+2t_2\ .
\end{equation}
Let $S_1,\ldots,S_h$ be the boundary components of $M$, and for every 
$i=1,\ldots,h$, let $Y_i^1,\ldots,Y_i^{k_i}$ be the connected components
of $\partial |P|$ that are taken into $S_i$ by $f$.
Since $P$ is orientable, each $Y_i^j$ is   a closed orientable surface. Let $d_i^j$ be the degree of the map $f|_{Y_i^j}\colon Y_i^j\to S_i$.
Since $[f_\ast(\partial z_P)]=[\partial f_\ast(z_P)]=[\partial z]$
we have $\sum_{j=1}^{k_i} d_i^j =d$ for every $i=1,\ldots,h$, whence %(see Proposition~\ref{degree:prop})
\begin{equation}\label{estimate3}
 \sum_{j=1}^{k_i} \| Y_i^j\|\geq d\cdot \|S_i\|\ 
\end{equation}
(see Proposition~\ref{degree:prop}).

Let us consider the space $H$ obtained by removing from $|P|$ a closed tubular neighbourhood
of edges and vertices. Of course, $H$ is an orientable handlebody, so in particular
$\partial H$ is an orientable
surface. If we denote by $Z_1,\ldots,Z_r$ the boundaries of regular neighbourhoods
of the vertices of $|P|\setminus \partial |P|$, then each $Z_l$ is an orientable surface, and
$\partial H$ is obtained from the union of the $Y_i^j$'s and the $Z_l$'s by tubing. 
Putting together Corollary~\ref{tubing:cor} and inequality~\eqref{estimate3}, this implies
that
\begin{equation}\label{estimate4}
 \| \partial H\|\geq 	\sum_{i=1}^h\sum_{j=1}^{k_i} \| Y_i^j\| +\sum_{l=1}^r \|Z_l\|\geq
\sum_{i=1}^h d\cdot \|S_i\| =d\cdot \|\partial M\|\ .
\end{equation}

Let us denote by $\Gamma$ the graph dual to $P$. Since vertices and edges of $\Gamma$ correspond 
respectively to $3$-simplices and pairs of identified faces of $P$, we compute
\begin{equation*}%\label{chi:eq}
\chi(\Gamma)=(t_0+t_1+t_2)-\frac{4t_0+3t_1+2t_2}{2}=\frac{-2t_0-t_1}{2}\ . 
\end{equation*}
The handlebody $H$ retracts onto $\Gamma$. As a consequence, if $g$
is the genus of $H$ we obtain
\begin{equation}\label{genus:eq}
 1-g=\chi(H)=\chi(\Gamma)=\frac{-2t_0-t_1}{2}\ .
\end{equation}

As we are assuming that $\|\partial M\|>0$, Equation~\eqref{estimate4}
implies that $g\geq 2$, and can be rewritten as 
\begin{equation}\label{chi2:eq}
 4g-4\geq d\cdot \|\partial M\|\ .
\end{equation}
Putting together Equations~\eqref{genus:eq} and \eqref{chi2:eq} we obtain
\begin{equation}\label{chi3:eq}
 4t_0+2t_1\geq d\cdot \|\partial M\|\ .
\end{equation}
Using inequalities~\eqref{estimate2} and \eqref{chi3:eq}
we get
$$
 4(t_0+t_1+t_2)=2(t_1+2t_2)+(4t_0+2t_1)\geq 3d\cdot \|\partial M\|\ ,
$$
whence 
\begin{equation}\label{final:eq}
 \frac{3}{4}\, \|\partial M\|\leq \frac{t_0+t_1+t_2}{d}=\frac{c(P)}{d}=
\frac{\|z\|_\Z}{d}\leq \|M,\bb M\|+\vare \ .
\end{equation}
%where the last inequality is due to the fact that our cycle $z$ was chosen as
%in the statement of Proposition~\ref{cycle:prop}. 
Since $\vare$ is arbitrary,
this concludes the proof of Theorem~\ref{main:thm}.

%\subsection{Adding $1$-handles to $3$-manifolds}\label{handles:subsec}
\subsection*{Proof of Theorem~\ref{general:seif}}
%\begin{proof}[Proof of Theorem~\ref{general:seif}]
If $M$ is a $3$-manifold with nonempty boundary, 
we say that $N$ is obtained from $M$ by adding a $1$-handle if
$N=M\cup_f H$, where $H=D^2\times [0,1]$ is a solid cylinder and $f\colon D^2\times \{0,1\}\to \partial M$ is
a homeomorphism onto the image.

\begin{prop}\label{handles:prop}
Let $M$ be any $3$-manifold with nonempty boundary, and
let $N$ be obtained from $M$ by adding a $1$-handle. Then 
$$
\sigma (N)\leq \sigma(M)+3,\qquad \sigma_\infty(N)\leq \sigma_\infty (M)+3\ .
$$
\end{prop}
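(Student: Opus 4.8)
The plan is to exhibit, given a triangulation $\Tt$ of $M$ realizing $\sigma(M)$, a triangulation of $N$ with exactly three more tetrahedra. The handle $H=D^2\times[0,1]$ is itself a solid cylinder, which can be triangulated by three tetrahedra: indeed, the triangular prism $\Delta^2\times[0,1]$ is the standard union of three tetrahedra along common faces, and by choosing $D^2$ to be a triangle we realize $H$ as such a prism. So the strategy is: triangulate $H$ by three tetrahedra in such a way that the two ends $D^2\times\{0\}$ and $D^2\times\{1\}$ each consist of a single $2$-simplex, then glue these two triangular faces to $\partial M$ using the attaching map $f$, after first subdividing $\Tt$ near $f(D^2\times\{0,1\})$ so that the images of the two ends are subcomplexes of $\partial\Tt$.

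First I would make this precise. Isotope $f$ so that $f(D^2\times\{0\})$ and $f(D^2\times\{1\})$ are each contained in the interior of a $2$-simplex of $\partial\Tt$ (this is possible since the attaching disks are small and disjoint; if the two disks land in the same boundary triangle, shrink one of them further). Then the images $f(D^2\times\{i\})$ are two disjoint embedded triangles inside triangles of $\partial\Tt$. Here one must be careful about the loose notion of triangulation: we are allowed to glue faces of $n$-simplices by simplicial pairings, so to attach the prism we should arrange that its two end-faces are identified \emph{with actual $2$-simplices of $\Tt$}, not with subtriangles sitting inside them. The cleanest way is to observe that adding a $1$-handle along two disks interior to two faces $\tau_0,\tau_1$ of $\partial\Tt$ is homeomorphic to the following pseudo-triangulated construction: remove nothing, but instead build $N$ as $\Tt$ together with a prism $P=\Delta^2\times[0,1]$ (three tetrahedra) whose bottom $\Delta^2\times\{0\}$ is glued by the identity-type affine pairing to $\tau_0$ and whose top $\Delta^2\times\{1\}$ is glued to $\tau_1$. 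Since $\tau_0,\tau_1$ are free (boundary) faces of $\Tt$, these pairings are admissible in the sense of a loose triangulation, and the resulting space is $M$ with a $1$-handle attached (the handle being a regular neighborhood in $N$ of the prism). This gives $\sigma(N)\le\sigma(M)+3$.

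For the stable inequality, the key point is that $1$-handle addition is compatible with passing to finite covers. Given a finite cover $\widehat M\stackrel d\to M$, the manifold $N=M\cup_f H$ has a corresponding cover $\widehat N\stackrel d\to N$ obtained by attaching, over each of the $d$ preimages of the handle's core arc, a copy of $H$; concretely $\widehat N$ is the union of $\widehat M$ with $d$ one-handles, so $\widehat N$ is obtained from $\widehat M$ by $d$ successive $1$-handle additions. Applying the already-established inequality $d$ times gives $\sigma(\widehat N)\le\sigma(\widehat M)+3d$, hence $\sigma(\widehat N)/d\le\sigma(\widehat M)/d+3$. Taking the infimum over all finite covers $\widehat M\to M$ yields $\sigma_\infty(N)\le\sigma_\infty(M)+3$. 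I expect the only genuinely delicate point to be the bookkeeping in the first paragraph: checking that attaching a prism by affine pairings along two free boundary $2$-simplices really produces $N$ (and not some other manifold or a non-manifold), and that the prism genuinely decomposes into three tetrahedra with single-triangle ends compatibly with the pairings — this is where one should either cite a standard fact (e.g.\ as in \cite{JR}) or draw the explicit three-tetrahedron decomposition of $\Delta^2\times[0,1]$.
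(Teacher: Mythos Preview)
Your proposal is correct and follows essentially the same route as the paper: triangulate the handle as a triangular prism $\Delta^2\times[0,1]$ with three tetrahedra, glue its two triangular ends to two boundary $2$-simplices of a minimal triangulation of $M$, and then lift to covers for the stable statement. The paper dispenses with your initial isotopy detour and goes directly to the gluing-along-boundary-faces construction; it also disposes of the two points you flag as delicate by noting (i) every component of $\bb M$ inherits a triangulation with at least two triangles, so distinct boundary faces $\tau_0,\tau_1$ are always available even when both ends of the handle land on the same component, and (ii) the affine identifications $\varphi_i$ can be chosen so that the glued space is homeomorphic to $N$ (this uses only that the homeomorphism type of a $1$-handle attachment depends just on the boundary components involved and the orientation data).
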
  
\begin{proof}
Let $T$ be a copy of the standard $2$-simplex, and fix an identification of 
the added $1$-handle  with
the prism
$T\times [0,1]$. For $i=0,1$,
we denote by $\partial_i M$ the boundary component of $\partial M$ glued to $T\times\{i\}$.  Observe
that we may have $\partial_0 M=\partial_1 M$. 
The prism $T\times [0,1]$ can be triangulated by $3$ simplices, in such a way that
$T\times\{0\}$ and $T\times \{1\}$ appear as boundary faces of the triangulation.
Let  $\mathcal{T}$ be a minimal triangulation of $M$.
Every component of $\partial M$ inherits from $\mathcal{T}$ a triangulation with at least two triangles, so we may choose
distinct boundary faces $T_0,T_1$ of $\mathcal{T}$ and affine identifications $\varphi_i\colon T\times \{i\}\to T_i$
in such a way that $N$ is homeomorphic to the space obtained by gluing $M$ and $T\times [0,1]$ along
the $\varphi_i$'s. This space is endowed with a triangulation with
$\sigma(M)+3$ tetrahedra, and this proves that
$$
\sigma(N)\leq\sigma (M)+3\ .
$$

As for the stable $\Delta$-complexity, if $f\colon\widehat{M}\to M$ is any covering
of degree $d$, then it is immediate that $f$ extends to a covering
$\overline{f}\colon \widehat{N}\to N$ of degree $d$, where $\widehat{N}$ is obtained
from $\widehat{M}$ by adding $d$ $1$-handles. We thus get
$$
\frac{\sigma(\widehat{N})}{d}\leq \frac{\sigma(\widehat{M})+3d}{d}=\frac{\sigma(\widehat{M})}{d}+3\ .
$$
Since the covering $f\colon\widehat{M}\to M$ was arbitrary, this implies that
$$
\sigma_\infty(N)\leq\sigma_\infty (M)+3\ ,
$$
and concludes the proof of Proposition~\ref{handles:prop}.
\end{proof}

We can now easily conclude the proof of Theorem~\ref{general:seif}. In fact, 
if $M$ is a Seifert manifold with nonempty boundary then we know from the introduction that $\sigma_\infty (M)=0$. Therefore,
if $N$ is obtained by consecutively adding $h$ handles on $M$, 
Proposition~\ref{handles:prop} implies that 
\begin{equation}\label{final1}
\sigma_\infty (N)\leq \sigma_\infty(M)+3h=3h\ .
\end{equation}
On the other hand, every boundary 
component of $M$ has null Euler characteristic, and no boundary component of any manifold obtained by adding $1$-handles to $M$
has positive Euler characteristic, so Proposition~\ref{vol:surf:prop} implies that $\| \partial N\|=4h$.
Putting together this inequality with Inequality~\eqref{final1}, and recalling that stable $\Delta$-complexity
always bounds the simplicial volume from above, we get
\begin{equation}\label{final2}
\| N,\bb N\|\leq \sigma_\infty (N)\leq \frac{3}{4}\|\partial N\|\ .
\end{equation}
Finally, Theorem~\ref{main:thm} implies that
all the inequalities in~\eqref{final2} are in fact equalities, which finishes the proof of Theorem~\ref{general:seif}. 
%\end{proof}

\section{The case of boundary irreducible aspherical manifolds}\label{aspherical:sec}
The estimate
provided by Theorem~\ref{main:thm} 
may be improved in the case
of boundary irreducible aspherical $3$-manifolds. 
This additional hypothesis can be exploited to construct
a topological straightening for simplices. The existence of such an operator 
implies in turn
that the simplicial volume may be computed just by looking at \emph{straight} chains, that
verify interesting additional topological properties.

\subsection*{A topological straightening}\label{top:straight:sub}
The \emph{straightening procedure} for simplices was introduced by Thurston in~\cite{Thurston}, in order to bound
from below the simplicial volume of hyperbolic manifolds.
If $M$ admits a nonpositively curved Riemannian metric, then
one may associate to every singular simplex $\sigma$ in $M$ a \emph{straight} simplex
$\str(\sigma)$, which
is uniquely determined by the vertices of $\sigma$
and the homotopy classes (relative to the endpoints) of the edges of $\sigma$. 
We now extend this construction to the case we are interested in.

Until the end of the section, we denote by
$M$ a boundary irreducible aspherical manifold such
that $\partial M$ is also aspherical.
Let $p\colon\widetilde{M}\to M$ be the universal covering of $M$, observe
that $\widetilde{M}$ is contractible, and  fix an identification
of $\pi_1(M)$  with the group $\Gamma$ of the covering automorphisms
of $p\colon \widetilde{M}\to M$.
%If $B_1,\ldots,B_h$ are the boundary components of $M$, we
%set $\widetilde{B}_i=p^{-1}(B_i)$. 
Since $M$ is boundary irreducible, the restriction of $p$ to
any connected component of $\partial \widetilde{M}$
%$\widetilde{B}_i$ 
is a universal covering of a component of $\partial M$.
%$B_i$. 
Moreover, since $\partial M$ is aspherical, every component of
$\partial\widetilde{M}$ is contractible. Using these facts and proceeding inductively on the dimension of simplices, one can prove the following:

\begin{prop}\label{straightening:summary}
 For $R=\Z,\Q,\R$, there exists a $\Gamma$-equivariant chain map
 $$
\strtil_*\colon  C_*(\wdtM;R)\to C_*(\wdtM;R)
 $$
 which satisfies the following properties:
\begin{enumerate}
 \item For every simplex $\sigma\in S_i(\wdtM)$, the chain $\strtil_i(\sigma)$ consists of a single simplex. Moreover,
 $\sigma$ and $\strtil_i(\sigma)$ share the same vertices.
 %and the homotopy classes (relative to the endpoints) 
 %of their edges.
\item If two simplices $\sigma,\sigma'\in S_i(\wdtM)$ share the same vertices, 
%and the homotopy classes (relative to the endpoints) 
 %of their edges, 
 then $\strtil_i(\sigma)=\strtil_i(\sigma')$.
 \item If $\sigma\in S_i(\partial \wdtM)$, then $\strtil_i(\sigma)\in S_i(\partial \wdtM)$, so $\strtil_*$ induces a $\Gamma$-equivariant chain map
 $C_*(\wdtM,\bb\wdtM;R)\to C_*(\wdtM,\bb\wdtM;R)$, which will still be denoted by $\strtil_*$.
 \item The map $\strtil_*\colon C_*(\wdtM,\bb\wdtM;R)\to C_*(\wdtM,\bb\wdtM;R)$ is $\Gamma$-equivariantly homotopic to the identity.
 \end{enumerate}
\end{prop}

By the previous proposition, $\strtil_*$ induces a map $\str_*\colon S_*(M)\to S_*(M)$ which induces
in turn a chain map $\str_*\colon C_*(M,\bb M;R)\to C_*(M,\bb M;R)$. 
Simplices that lie in the image of $\str_\ast$
will be called \emph{straight}. 

\begin{lemma}\label{straight:simplex}
 Let $\sigma$ be a straight $k$-simplex, $k\geq 2$, with image in $M$, and suppose that
$\sigma$ is not supported on $\bb M$. Then:
\begin{enumerate}
 \item at most one $(k-1)$-face of $\sigma$ lies on $\bb M$;
\item if $k=3$ and no $2$-face of $\sigma$ lies on $\bb M$, then
at most two edges of $\sigma$ lie on $\bb M$;
\item if $k=3$ then there exist at most three edges of $\sigma$ in $\bb M$. 
\end{enumerate}
\end{lemma}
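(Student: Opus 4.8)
We work with a straight $k$-simplex $\sigma$ whose vertices, in the universal cover, are $\widetilde x_0,\dots,\widetilde x_k\in\widetilde M$. The key structural fact we exploit is that a straight simplex is entirely determined by the $(k+1)$-tuple of its vertices: its restriction to any face is again straight and determined by the corresponding sub-tuple of vertices (this is exactly the content of the construction of $i_\ast$ in Lemma~\ref{i1:lemma}, which was defined inductively on dimension via fixed choices on $\Gamma$-orbit representatives of tuples). In particular, if a $(k-1)$-face of $\sigma$ lies on $\partial M$, then its vertices lift to a boundary $k$-tuple (all contained in a single component $\widetilde B$ of $\partial\widetilde M$), and by the construction of $i_\ast$ in Lemma~\ref{i1:lemma}(2) that face is the straight simplex $i_{k-1}$ applied to that tuple, i.e. it equals the straightened face $\mathrm{str}$ applied to the boundary tuple of vertices. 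The heart of the argument is therefore combinatorial: I must show that among the $k+1$ vertices, the subsets that can ``span a boundary face'' are restricted, because a component of $\partial\widetilde M$ is $\pi_1$-injectively and convexly embedded in a suitable sense.

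\textbf{Step 1 (the crucial geometric input).} First I would establish: \emph{if all $k+1$ vertices of a straight simplex lie on a single component $\widetilde B$ of $\partial\widetilde M$, then the whole straight simplex has image in $\widetilde B$}. This is immediate from the way $i_\ast$ was built, since for a boundary tuple $i_k$ was \emph{chosen} to have image in $\widetilde B$. The contrapositive, applied to the simplex $\sigma$ which by hypothesis is not supported on $\partial M$, says: \emph{not all} vertices of $\sigma$ lie on one component of $\partial\widetilde M$. Similarly, a $(k-1)$-face lies on $\partial M$ iff its $k$ vertices all lie on one component of $\partial\widetilde M$.

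\textbf{Step 2 (counting boundary faces — part (1)).} Suppose two distinct $(k-1)$-faces, say opposite to vertices $\widetilde x_a$ and $\widetilde x_b$, both lie on $\partial M$. The first lifts to a tuple all in some component $\widetilde B_1$, the second to a tuple all in some $\widetilde B_2$. These two $k$-tuples share the $k-1$ vertices $\{\widetilde x_i : i\neq a,b\}$, a set of size $k-1\geq 1$ (here $k\geq 2$). A point of $\partial\widetilde M$ lies in exactly one component, so $\widetilde B_1=\widetilde B_2=:\widetilde B$, and then \emph{all} $k+1$ vertices lie in $\widetilde B$ — contradicting Step~1. This gives (1).

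\textbf{Step 3 (edges — parts (2) and (3)).} Now take $k=3$, so $\sigma$ has $4$ vertices $\widetilde x_0,\widetilde x_1,\widetilde x_2,\widetilde x_3$ and $6$ edges. An edge $\{i,j\}$ lies on $\partial M$ iff $\widetilde x_i,\widetilde x_j$ lie on a common boundary component; write $i\sim j$ in that case (adding $i\sim i$). Since distinct components are disjoint, $\sim$ is an equivalence relation on $\{0,1,2,3\}$, and boundary edges correspond exactly to the pairs inside a single class. By Step~1 no class has size $4$. For part~(3): the number of such pairs is maximized by a single class of size $3$ (giving $\binom{3}{2}=3$) rather than two classes of size $2$ (giving $2$), so there are at most three boundary edges. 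For part~(2): if additionally no $2$-face lies on $\partial M$, then by Step~1 (applied in dimension $2$: a $2$-face lies on $\partial M$ iff its $3$ vertices lie on a common component) no class has size $\geq 3$; hence every class has size $\leq 2$, so the partition is either $2+2$, $2+1+1$, or finer, yielding at most $\binom{2}{2}+\binom{2}{2}=2$ boundary edges, i.e. at most two edges of $\sigma$ lie on $\partial M$.

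\textbf{Where the difficulty lies.} The only non-formal point is Step~1, and it has in fact already been arranged by the inductive choices in the proof of Lemma~\ref{i1:lemma}: boundary tuples were deliberately sent by $i_\ast$ into the appropriate boundary component, using the asphericity (hence acyclicity, relative to a base component) of $\partial\widetilde M$. So the main obstacle is essentially bookkeeping: being careful that ``lies on $\partial M$'' for a face of the straight simplex $\sigma=\mathrm{str}(\sigma')$ is genuinely equivalent, after lifting, to ``all its vertices lie on one component of $\partial\widetilde M$'' — the forward direction is Step~1's contrapositive reasoning plus $\pi_1$-injectivity of each boundary component (so that a face mapping into $\partial M$ lifts to a face mapping into a single component of $\partial\widetilde M$), and the reverse is the defining property of $i_\ast$. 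Once this dictionary between faces-on-the-boundary and vertex-partitions is in place, parts (1)–(3) are the elementary counting of pairs inside the blocks of a partition of a $3$- or $4$-element set subject to a bound on block sizes.
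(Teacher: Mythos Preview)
Your proof is correct and follows essentially the same approach as the paper's: both reduce the question to which subsets of the lifted vertices lie in a common component of $\partial\widetilde M$, using the defining property of $i_\ast$ on boundary tuples. Your formulation via the equivalence relation $i\sim j$ (vertices in a common boundary component) and the resulting block-counting is a clean repackaging of what the paper does more directly by checking that any three (resp.\ four) edges of the $1$-skeleton of $\Delta^3$ form a connected subgraph and hence force three (resp.\ four) vertices into one component; the two arguments are equivalent. One small remark: the forward implication ``face on $\partial M$ $\Rightarrow$ lifted vertices in one component of $\partial\widetilde M$'' needs only connectedness of the face together with $p^{-1}(\partial M)=\partial\widetilde M$, not $\pi_1$-injectivity per se.
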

\begin{proof} 
 Let $\widetilde{\sigma}$ be a fixed lift of $\sigma$ to $\wdtM$. If there exists a component of $\bb\wdtM$ containing
 $m+1$ vertices of $\widetilde{\sigma}$, then there exists a simplex $\widetilde{\sigma}'\in S_k(\wdtM)$ having the same vertices
 of $\widetilde{\sigma}$ and an $m$-dimensional face supported on $\bb\wdtM$. By Proposition~\ref{straightening:summary}, this implies
 that $\widetilde{\sigma}=\strtil_k(\widetilde{\sigma}')$ also has an $m$-dimensional face supported in $\bb \wdtM$.
 In particular, the assumption that $\sigma$ is not supported on $\bb M$ implies that no  component of $\bb\widetilde{M}$ contains all the vertices
of $\widetilde{\sigma}$.

(1) 
 If $\sigma$ has two faces on $\bb M$, then the vertices of $\widetilde{\sigma}$ 
are contained in the same connected component of $\bb \widetilde{M}$, a contradiction.

(2) Suppose  that $\sigma$ has at least three edges on $\bb M$. Since $k=3$, the 
union of the corresponding edges of $\widetilde{\sigma}$ is connected, so
at least three vertices
of $\widetilde{\sigma}$ lie on the same connected component of $\bb\wdtM$, and 
at least one $2$-face of $\sigma$ is 
supported on $\bb M$.

(3) If four edges of $\sigma$ lie on $\bb M$, then as in (2), the union of the corresponding edges of $\widetilde{\sigma}$ is connected. 
But the vertices of these four edges of the $3$-simplex are all the vertices of the $3$-simplex, which all lie on the same connected component of $\bb\wdtM$, 
a contradiction.
\end{proof}

The fact that $M$ supports a relative straightening
can be used to improve Proposition~\ref{cycle:prop} as follows:

\begin{prop}\label{cyclehyp:prop}
Suppose that $M$ is
an aspherical and boundary irreducible $n$-manifold, $n\geq 2$, and that $\bb M$ is also aspherical.
Let $\vare>0$ be fixed and assume that $\|M,\bb M\|>0$. Then, there exists a relative integral cycle $z\in C_n (M,\partial M;\Z)$ 
with associated pseudomanifold $P$ such that the following conditions hold:
\begin{enumerate}
\item
$[z]=d\cdot [M,\bb M]_\Z$ in $H_n(M,\partial M;\Z)$ for some integer $d> 0$, and
$$ \frac{\|z\|_\Z}{d}\leq \| M,\bb M\| +\vare\ ;$$
%\item
%every singular simplex appearing in $z$ is straight;
\item
every simplex of $P$ has at most one $(n-1)$-dimensional boundary face;
\item
if $n=3$, then 
every simplex of 
$P$ without $2$-dimensional boundary  faces has at most
two edges contained in $\bb |P|$ and every simplex has at most three edges in $\bb |P|$.
\end{enumerate}
\end{prop}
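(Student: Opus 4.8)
The plan is to straighten the cycle produced by Proposition~\ref{cycle:prop} and then to reassemble the straightened simplices into a pseudomanifold in which no simplex is ever glued to another along a face whose image lies in $\bb M$. I would begin with a cycle $z_0=\sum_{i=1}^{k}\vare_i\sigma_i$ as in Proposition~\ref{cycle:prop} (for the same $\vare$), so that $[z_0]=d\cdot[M,\bb M]_\Z$ and $\|z_0\|_\Z/d\leq\|M,\bb M\|+\vare$, and then pass to $z_1:=\str_\ast(z_0)=\sum_{i=1}^{k}\vare_i\str(\sigma_i)$. By Lemma~\ref{i1:lemma} each $\str(\sigma_i)$ is a single straight simplex, so $\|z_1\|_\Z\leq\|z_0\|_\Z$, and by Lemma~\ref{i2:lemma} one has $[z_1]=[z_0]=d\cdot[M,\bb M]_\Z$; thus conditions~(1) and~(2) are already in force. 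Throughout I would use that $i_\ast$ is a chain map, so that the restriction of a straight simplex to one of its faces is again the straight simplex on the vertices of that face --- in particular straightening is compatible with the face identifications occurring in the pseudomanifold construction of Section~\ref{gluing:cycle:sub}.

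Next I would discard from $z_1$ those $\str(\sigma_i)$ whose image is contained in a single component of $\bb M$; these vanish in $C_n(M,\bb M;\Z)$, so the homology class is unchanged. Let $I$ index the survivors, so that $z:=\sum_{i\in I}\vare_i\str(\sigma_i)$ is a relative cycle still satisfying~(1) and~(2). The essential step is to build a pseudomanifold $P$ on the simplices $\{\str(\sigma_i)\}_{i\in I}$ by choosing a family of canceling pairs that is maximal subject to the requirement that no face whose image lies in $\bb M$ is glued; as in Lemma~\ref{basic:lemma} this still yields a pseudomanifold together with a map $f\colon(|P|,\bb|P|)\to(M,\bb M)$ taking the tautological cycle to $z$, even though the chosen family need not be maximal among \emph{all} canceling pairs. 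I would then establish the crucial claim that the boundary faces of $P$ are exactly the faces whose image lies in $\bb M$. One inclusion is the constraint just imposed; for the converse, if a face $F$ of some $\str(\sigma_i)$, $i\in I$, has restriction $\rho\notin S_{n-1}(\bb M)$, then, since $z$ is a relative cycle, $\partial z\in C_{n-1}(\bb M;\Z)$, so $\rho$ occurs in $\partial z$ with total coefficient $0$. Every face of every $\str(\sigma_j)$, $j\in I$, with restriction $\rho$ is eligible for gluing, and canceling pairs always pair faces of equal restriction; since the $+\rho$ and $-\rho$ contributions to $\partial z$ are equal in number, maximality of the chosen family forces $F$ to be glued, hence $F$ is not a boundary face.

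It remains to deduce conditions~(3) and~(4) from this claim together with Lemma~\ref{straight:simplex}, which applies to each $\str(\sigma_i)$, $i\in I$, since these simplices are straight and not supported on $\bb M$. By the claim the boundary faces of $\str(\sigma_i)$ in $P$ are precisely its faces with image in $\bb M$, of which there is at most one by Lemma~\ref{straight:simplex}(1); this is~(3). Moreover $f(\bb|P|)\subseteq\bb M$, so every edge of $\str(\sigma_i)$ lying on $\bb|P|$ has image in $\bb M$; if in addition $\str(\sigma_i)$ has no $2$-dimensional boundary face then, by the claim, it has no $2$-face with image in $\bb M$, so Lemma~\ref{straight:simplex}(2) bounds the number of such edges by two, while Lemma~\ref{straight:simplex}(3) always bounds it by three, which is~(4) (the statement being vacuous unless $n=3$). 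The main obstacle is exactly the crucial claim of the second paragraph: one has to choose the gluings so as to leave every face on $\bb M$ unglued while still exhausting all the ``interior'' cancellations, since this is precisely what turns the bounds of Lemma~\ref{straight:simplex} on faces and edges lying on $\bb M$ into bounds on the boundary faces and boundary edges of $P$.
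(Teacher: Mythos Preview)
Your proposal is correct and follows the same overall strategy as the paper: straighten an efficient integral cycle, discard simplices supported on $\bb M$, and then read off conditions~(3) and~(4) from Lemma~\ref{straight:simplex}. The paper's own proof is much terser; it simply takes $z=\str_n(z')$ for a cycle $z'$ satisfying conditions~(1) and~(2) of Proposition~\ref{cycle:prop}, removes simplices supported on $\bb M$, and asserts that ``Points~(3) and~(4) follow from point~(2) and Lemma~\ref{straight:simplex}''.

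The genuine contribution of your write-up is the care you take with the pseudomanifold. You build $P$ from a family of canceling pairs that is maximal \emph{subject to the constraint that no face with image in $\bb M$ is glued}, and you prove that the boundary faces of $P$ then coincide exactly with the faces whose image lies in $\bb M$. This is precisely what makes the appeal to Lemma~\ref{straight:simplex}(2) clean: ``no $2$-dimensional boundary face'' becomes literally ``no $2$-face on $\bb M$''. With an unrestricted maximal family, two faces lying on $\bb M$ could in principle cancel each other and be glued, so a simplex with no boundary face might still have a $2$-face on $\bb M$, and Lemma~\ref{straight:simplex}(2) would not apply directly; your construction sidesteps this, thereby making explicit a point the paper leaves implicit.

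One small remark: your $P$ may fail to be an ``associated pseudomanifold'' in the strict sense of the paper's definition, since two faces on $\bb M$ can form a canceling pair which you deliberately leave unpaired, so your family of pairs need not be maximal overall. This is harmless: nothing in the statement of Proposition~\ref{cyclehyp:prop} or in its subsequent applications uses maximality beyond the fact (which you do verify) that every face with image not in $\bb M$ is glued, so that $f(\bb|P|)\subseteq\bb M$ and the natural map $f\colon(|P|,\bb|P|)\to(M,\bb M)$ carries $z_P$ to $z$.
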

\begin{proof}
Let $z'$ be an integral cycle satisfying conditions (1) and (2) of Proposition~\ref{cycle:prop}, and set $z= \str_n(z')\in C_n(M,\bb M;\mathbb{Z})$. 
As usual,
we understand that no simplex appearing in $z$ is supported in $\bb M$ (otherwise,
we may just remove it from $z$ without modifying 
the class of $z$ in $C_n(M,\bb M;\mathbb{Z})$ and decreasing
$\|z\|_\mathbb{Z}$).
Point~(1) descends from the fact that
the straightening operator is norm nonincreasing and homotopic to the identity,
while points~(2) and (3) follow from Lemma~\ref{straight:simplex}. 
\end{proof}

\begin{comment}
Proposition~\ref{cyclehyp:prop} implies the following:

\begin{prop}\label{aspherical:thm}
Let $M$ be a boundary irreducible aspherical $n$-manifold, $n\geq 2$,
and assume that
$\partial M$ is also aspherical. 
Then
$$
\| M,\partial M\|\geq  \|\partial M\|\ .
$$ 
\end{prop}
\begin{proof}
 Take $\vare>0$. If $z$ is chosen as in Proposition~\ref{cyclehyp:prop}, then
$\|\bb z\|_\mathbb{Z}\leq \|z\|_\mathbb{Z}$, so
$$
\|\bb M\|\leq \frac{\|\bb z\|_\mathbb{Z}}{d}\leq \frac{\|z\|_\mathbb{Z}}{d}\leq
\|M,\bb M\|+\vare\, ,
$$
which proves the proposition since $\vare$ is arbitrary.
\end{proof}

\end{comment}

%\section{Proofs of Theorems~\ref{aspherical3:thm} and~\ref{prod:surf:thm} and Corollary~\ref{cor: prod surf}}\label{aspherical3:sec}
\subsection*{Proof of Theorem~\ref{aspherical3:thm}}
%Let us first concentrate our attention on the proof of Theorem~\ref{aspherical3:thm}.
Let us suppose that $M$ is an aspherical boundary irreducible $3$-manifold.
As usual, we may also suppose that $M$ is oriented, and that $\|M,\bb M\|>0$ (otherwise there is nothing to prove).
In order to exploit the machinery introduced above, 
we first reduce to the case
when $\bb M$ is also aspherical. So, let us suppose that a component $S$ of
$\bb M$ is a sphere. Since $M$ is aspherical, $S$ is homotopically trivial, whence homologically trivial, in $M$. This implies that $\bb M=S$, so 
$\|\bb M\|=0$ and the conclusion of Theorem~\ref{aspherical3:thm} is trivially 
satisfied. (In fact, using the Poincar\'e conjecture one can  prove that the
only aspherical $3$-manifold with at least one spherical boundary component is the ball.)
Therefore, henceforth we suppose that $\bb M$ is also aspherical.

We denote by $z$ the cycle provided by Proposition~\ref{cyclehyp:prop}, and by
$P$ the associated pseudomanifold. 
As usual, let $z_P\in C_3(|P|,\partial |P|;\Z)$ be the relative
cycle represented by the (signed) sum of the simplices
of $P$, and let
$f\colon (|P|,\partial |P|)\to (M,\partial M)$ be such that $f_\ast(z_P)=z$.
%If $T$ is a simplex of $P$, then $T$ is endowed with a natural parameterization $\sigma_T\colon \Delta^3\to T\subseteq P$.
%Moreover, there exists $\vare_T\in\{-1,1\}$ such that, if
%$z_P=\sum_T \vare_T\sigma_T$, then $f_\ast(z_P)=z$.
Also recall  that the space $\bb |P|$ is an orientable
surface. 
%If $[\bb P]_\Z$ is the sum of the integral fundamental classes of the components of $\bb |P|$, then
%$[\bb P]_\Z=[\partial z_P]$ (we prefer here the notation $[\bb P]_\Z$ rather than the heavier $[\bb |P|]_\Z$).
%Therefore, 
The equality $f_*(z_P)=d\cdot [M,\bb M]_\Z$ implies that 
%$$
%(f|_{\bb |P|})_*([\bb P]_\Z)=d\cdot [\bb M]_\Z\ ,
%$$
%where $[\bb M]_\Z$ is the sum of the integral fundamental
%classes of the components of $\bb M$. This equality implies that
\begin{equation}\label{firstsimpl}
(f|_{\bb |P|})_*([\bb P])=d\cdot [\bb M]\ ,
\end{equation} 
where $[\bb P]$ (resp.~$[\bb M]$) is the sum of the
\emph{real} fundamental classes of the components of $\bb |P|$ (resp.~of $\bb M$).

Let $\Omega_i$, for $i=0,\ldots,4$, be the set of simplices
of $P$ having exactly $i$ boundary $2$-faces. 
As usual, we denote by $t_i$ the number of elements
of $\Omega_i$. By Proposition~\ref{cyclehyp:prop}
we have $\Omega_2=\Omega_3=\Omega_4=\emptyset$, so
$$
t_2=t_3=t_4=0,\qquad \| z\|_\Z=t_0+t_1\ .
$$
Since $\bb |P|$ admits a triangulation with $t_1$ triangles we have
$t_1\geq \| \bb |P| \|$. Also, Equation~\eqref{firstsimpl}
implies that
$\| \bb |P| \|\geq d\|\bb M\|$, so
\begin{equation}\label{t1:equation}
t_1\geq d\cdot \|\bb M\|\ .
\end{equation}

In order to prove Theorem~\ref{aspherical3:thm} we now need to bound $t_0$ from below.
Let us first introduce some definitions.
We say that an edge $e$ of the $2$-dimensional pseudomanifold $\bb P$ 
is \emph{nice} if $e$ is the edge of a
simplex in $\Omega_0$.
We also say that an edge of $\bb P$ is \emph{bad} if it is not nice.
An edge of the topological realization $\bb |P|$ is \emph{nice} if it is the image of at least one nice edge in $\bb P$.
An edge in $\bb |P|$ is \emph{bad} if it is not nice.
Notice that bad edges in $\bb|P|$ are \emph{not} the image of bad edges in $\bb P$,
since a nice edge in $\bb |P|$ will be the image of a certain number of
nice edges in $\bb P$ and necessarily two bad edges of $\bb P$ corresponding to the
two (possibly nondistinct) tetrahedra having as 2-faces the 2-faces in $\bb |P|$ containing the
original nice edge.

%We say that an edge $e$ of the $2$-dimensional pseudomanifold $\bb P$ 
%is \emph{nice} if $e$ is the edge of at least one
%simplex in $\Omega_0$.
%We also say that an edge of $\bb P$ is \emph{bad} if it is not nice.

\begin{lemma}\label{bad:lemma}
Let $e$ be a bad edge of $\bb |P|$, let
$T_j,T_{j'}$ be the triangles of $\bb P$ adjacent to $e$, and 
let $\Delta^3_j$ (resp.~$\Delta^3_{j'}$) be the simplex of $P$ containing
$T_j$ (resp.~$T_{j'}$). If
$F_j$ (resp.~$F_{j'}$) is the $2$--face of $\Delta^3_j$ 
(resp.~$\Delta^3_{j'}$) such that
$e=F_j\cap T_j=F_{j'}\cap T_{j'}$,  then $F_j,F_{j'}$ are glued to each other in $|P|$.
\end{lemma}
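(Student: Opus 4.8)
The plan is to argue by contradiction: assume $e$ is bad but the two faces $F_j, F_{j'}$ are \emph{not} glued to each other in $P$. First I would unpack the definition of ``bad'': since $e$ is bad, \emph{no} simplex of $\Omega_0$ (i.e., no simplex all of whose $2$-faces lie in the interior) has $e$ among its edges. In particular both $\Delta^3_j$ and $\Delta^3_{j'}$ lie in $\Omega_1$ (they cannot lie in $\Omega_2 \cup \Omega_3 \cup \Omega_4$, which are empty by Proposition~\ref{cyclehyp:prop}, nor in $\Omega_0$, since $e$ is an edge of each of them and $e$ is bad). So each of $\Delta^3_j$ and $\Delta^3_{j'}$ has exactly one $2$-dimensional boundary face, namely $T_j$ and $T_{j'}$ respectively.

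Next I would examine the link of $e$ in $|P|$. Since $e$ lies on $\bb|P|$ and $\bb|P|$ is a surface (Lemma~\ref{basic:gluing:lemma}), the $2$-faces of $P$ containing $e$, together with the identifications among them coming from $P$, form a sequence (a ``fan'') of $2$-faces $G_0 = T_j, G_1, G_2, \ldots, G_m = T_{j'}$ starting and ending at the two boundary $2$-faces $T_j, T_{j'}$, where consecutive $G_\ell, G_{\ell+1}$ are the two $2$-faces at $e$ of a common $3$-simplex $\Delta^3_{\ell}$ of $P$, and where the gluings of $P$ identify $G_\ell$ (as a face of $\Delta^3_{\ell-1}$) with $G_\ell$ (as a face of $\Delta^3_\ell$) for $1 \le \ell \le m-1$. (Here I am using that near an interior edge the faces around it in an orientable pseudomanifold close up into such a fan, with endpoints on the boundary precisely when $e \in \bb|P|$.) The key point: if $F_j$ and $F_{j'}$ were not glued to each other, then the fan would have length $m \ge 2$, so there would be at least one \emph{intermediate} $3$-simplex $\Delta^3_\ell$, $1 \le \ell \le m-1$, in this fan.

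Such an intermediate simplex $\Delta^3_\ell$ has both of its $2$-faces at $e$ \emph{interior} (they are glued to neighbours in the fan). I would then count its boundary $2$-faces: it has at most $4 - 2 = 2$ boundary $2$-faces, hence $\Delta^3_\ell \in \Omega_0 \cup \Omega_1 \cup \Omega_2$; since $\Omega_2 = \emptyset$, in fact $\Delta^3_\ell \in \Omega_0 \cup \Omega_1$. If $\Delta^3_\ell \in \Omega_0$, then it is a simplex of $\Omega_0$ having $e$ as an edge, contradicting that $e$ is bad — done. So $\Delta^3_\ell \in \Omega_1$, i.e.\ it has exactly one boundary $2$-face. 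But $\Delta^3_\ell$ has four $2$-faces: two at $e$ (both interior) and two not at $e$, of which exactly one is a boundary face and one is interior. This forces one of the two $2$-faces of $\Delta^3_\ell$ \emph{not} containing $e$ to be interior. The main obstacle — and the step that needs care — is ruling this out, and I expect it comes from a second application of Proposition~\ref{cyclehyp:prop}: the proof should continue (presumably in a subsequent lemma, e.g.\ by showing that along such a fan all intermediate simplices are forced into a configuration with too many edges on $\bb|P|$, violating condition~(4) of Proposition~\ref{cyclehyp:prop}). Concretely, the edges of $\Delta^3_\ell$ opposite to the two faces-at-$e$ both lie on $e \subseteq \bb|P|$; following this around the fan accumulates boundary edges on the interior $2$-faces of intermediate simplices, and condition~(4) (a simplex without $2$-dimensional boundary faces has at most two edges on $\bb|P|$, and any simplex has at most three) will eventually be violated unless the fan has length exactly $1$, i.e.\ $m = 1$, which is exactly the assertion that $F_j$ and $F_{j'}$ are glued to each other. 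Thus I would pin down the contradiction by tracking edges-on-the-boundary of the intermediate simplices against condition~(4), and that bookkeeping — rather than the topology of the edge link — is where the real work lies.
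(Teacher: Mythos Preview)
Your strategy is the same as the paper's, and you are one line away from finishing the main case. Once you have an intermediate simplex $\Delta^3_\ell\in\Omega_1$ with both of its $2$-faces at $e$ interior, its unique boundary $2$-face $B$ is \emph{not} one of those two, hence does not contain $e$. Then the three edges of $B$ lie on $\bb|P|$, and $e$ is a fourth edge of $\Delta^3_\ell$ on $\bb|P|$ distinct from all of them; this already contradicts the ``at most three edges on $\bb|P|$'' clause of condition~(4) in Proposition~\ref{cyclehyp:prop}. No accumulation along the fan and no subsequent lemma is needed: this single count is the entire content of the paper's case~(1), phrased for the one simplex $\overline{\Delta^3}$ glued to $\Delta^3_j$ along $F_j$. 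Your speculation that ``the real work lies'' in further bookkeeping is the gap; the work is already done.

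What your fan write-up does not handle cleanly are the degenerate situations in which the simplex across $F_j$ is $\Delta^3_j$ itself (a self-gluing) or is $\Delta^3_{j'}$ but along the wrong face. In those cases the simplex is not ``intermediate'' in your sense: one of its faces at the relevant preimage of $e$ may be the boundary face $T_j$ or $T_{j'}$, and the four-edge count above breaks down. The paper treats these as separate short cases~(2) and~(3): using again that each simplex has at most three edges on $\bb|P|$, the gluing $F_j\to\overline F$ is forced to match the unique boundary edge of $F_j$ with the unique boundary edge of $\overline F$, which pins down $\overline F=F_{j'}$. Your fan picture implicitly assumes each abstract simplex contributes one edge mapping to $e$; once self-identifications are allowed you need exactly this extra observation, and your indexing (the claim that $m\ge 2$ suffices and that $1\le\ell\le m-1$ are the intermediate positions) should be tightened accordingly.
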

\begin{proof}
%Let us first suppose that $T_j=T_{j'}$ (so in particular $\Delta^3_j=\Delta^3_{j'}$).
%By Lemma~\ref{}, 
%Since every simplex of $P$ has at most one boundary face, we have
%$\Delta^3_j\neq \Delta^3_{j'}$. 
Let $\overline{\Delta^3}$ be the simplex of $P$ glued to $\Delta^3_j$
along $F_j$. Denote by $\overline{F}$ the face
of $\overline{\Delta^3}$ paired to $F_j$. 
We consider separately the following cases.
%We show that each of the following assumptions leads to a contradiction, and this
%will in turn imply the lemma.

(1) 
Suppose that $\overline{\Delta^3}\neq \Delta^3_j$ and 
$\overline{\Delta^3}\neq \Delta^3_{j'}$.
Since $e$ is bad, at least one face $F$ of $\overline{\Delta^3}$ 
has to lie on $\bb |P|$.
Moreover, the conditions
$\overline{\Delta^3}\neq \Delta^3_{j}$, $\overline{\Delta^3}\neq \Delta^3_{j'}$ imply that $F$ cannot
contain $e$, because otherwise $e$ would be adjacent to $3$ boundary 
faces of $P$ (counted with multiplicities). 
Therefore, $\overline{\Delta^3}$ contains four edges which lie on $\bb |P|$, and this contradicts
point (4) of Proposition~\ref{cyclehyp:prop}.

(2) Suppose that $\overline{\Delta^3}=\Delta^3_j$. Since $\Delta^3_j$ has at most
three edges on $\bb |P|$, the unique boundary edge of $F_j$ is paired
to the unique boundary edge of $\overline F$. As a consequence we have
$\overline F=F_{j'}$, as desired (note that in this case we have 
$T_j=T_{j'}$ and $\Delta^3_j=\Delta^3_{j'}$).

(3) Suppose that $\overline{\Delta^3}=\Delta^3_{j'}$. 
Using that $\Delta^3_{j'}$ has at most three edges on $\bb |P|$,  
one may easily show that $\overline F=F_{j'}$, whence the conclusion.
\end{proof}

We denote by $\Gamma\subseteq \bb |P|$ the union of all the nice edges
of $\bb P$. Then, $\Gamma$ is a (possibly disconnected) graph in $\bb |P|$. 

Let $\Gamma_i$, for $i=1,\ldots, s$ be the connected components of $\Gamma$.
For each $i$ we denote by $N_i=N(\Gamma_i)$ a closed regular neighbourhood
of $\Gamma_i$ in $\bb |P|$, chosen in such a way that $N_i\cap T$
is a regular neighbourhood of $\Gamma_i\cap T$ for every triangle
$T$ of $\bb |P|$ and $N_i\cap N_j=\emptyset$ whenever $i\neq j$. We also set $N=\cup_{i=1}^s N_i$ and
$W=\overline{\bb |P|\setminus N}$. Finally, we denote by
$W_1,\ldots, W_r$ the components of $W$ (see Figure~\ref{surfaces1:fig}).

\begin{figure}
\begin{center}
\input{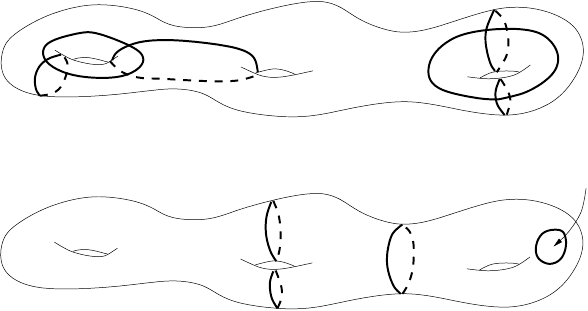_t}
\caption{$N_i$ is a regular neighbourhood of $\Gamma_i$ on $\bb |P|$ for $i=1,2$.\protect\label{surfaces1:fig}}
\end{center}
\end{figure}

We will prove that $f|_{\bb |P|}$ is homotopic to a map which is constant
on each $W_i$. Since $f|_{\bb |P|}$ has degree $d$, this will imply that $N$ has to be sufficiently complicated, and this will prove in turn that the number of nice edges
of $\bb |P|$ cannot be too small.

\begin{lemma}\label{pi1inj:lemma}
Let $\gamma\subseteq W$ be a loop. Then
$f(\gamma)$ is null-homotopic in $\bb M$.
\end{lemma}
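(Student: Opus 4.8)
\medskip
\noindent\emph{Sketch of the intended proof.}
The plan is to prove the stronger statement that $\gamma$ is null-homotopic in $|P|$, and then to pass to $\bb M$ using asphericity and boundary irreducibility. Indeed, if $\gamma$ bounds a disc in $|P|$ then $f(\gamma)$ bounds a disc in $M$; as $\gamma$ is connected, $f(\gamma)$ lies in a single component $B$ of $\bb M$, and $\pi_1(B)\to\pi_1(M)$ is injective by boundary irreducibility, so the class of $f(\gamma)$ in $\pi_1(B)$ vanishes; since $B$ is aspherical this class is represented by a null-homotopic loop, whence $f(\gamma)$ is null-homotopic in $B\subseteq\bb M$. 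So everything reduces to showing that $\gamma$ bounds a disc in $|P|$.

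First I would put $\gamma$ in general position: after a homotopy within $W$ we may assume that $\gamma$ lies in $\interior{W}$ and meets the $1$-skeleton of $\bb P$ transversally in finitely many points $p_1,\dots,p_m$, each interior to an edge of $\bb P$ and ordered cyclically along $\gamma$. Since $W$ is disjoint from the union $\Gamma$ of nice edges, the edge $e_k$ of $\bb P$ through $p_k$ is bad. If $m=0$ then $\gamma$ sits in a single $2$-simplex of $\bb P$ and bounds, so assume $m\geq1$. For $k=1,\dots,m$ (indices mod $m$) the sub-arc of $\gamma$ from $p_{k-1}$ to $p_k$ lies in a single $2$-simplex $T_k$ of $\bb P$, which then has both $e_{k-1}$ and $e_k$ among its edges. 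Let $\Delta_k$ be the (unique) $3$-simplex of $P$ with $T_k$ as a boundary face, and $v_k$ the vertex of $\Delta_k$ not lying on $T_k$. As $\Delta_k\in\Omega_1$, the only $2$-face of $\Delta_k$ besides $T_k$ containing $e_k$ is the one spanned by $e_k$ and $v_k$, which I call $F_k^+$; similarly let $F_k^-$ be the $2$-face of $\Delta_k$ spanned by $e_{k-1}$ and $v_k$.

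The key point is that all the vertices $v_k$ coincide in $|P|$. Lemma~\ref{bad:lemma}, applied to the bad edge $e_k$, says that the two $3$-simplices meeting $e_k$ are $\Delta_k$ and $\Delta_{k+1}$, glued to each other along $F_k^+$ and $F_{k+1}^-$. This gluing is affine and compatible with the identifications defining $|P|$, so it restricts to the canonical identification of the common edge $e_k$ and hence carries the remaining vertex $v_k$ of $F_k^+$ onto the remaining vertex $v_{k+1}$ of $F_{k+1}^-$. Thus $v_k=v_{k+1}$ in $|P|$ for all $k$, and running around the cycle all the $v_k$ equal a single vertex $u$ of $|P|$. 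Now I would cone $\gamma$ off to $u$ one simplex at a time: inside the convex simplex $\Delta_k$ the sub-arc of $\gamma$ lying in $T_k$ is homotopic rel endpoints to $a_k\cdot b_k$, where $a_k$ is the straight segment from $p_{k-1}$ to $u$ (it lies in $F_k^-$) and $b_k$ the straight segment from $u$ to $p_k$ (it lies in $F_k^+$). Performing these homotopies simultaneously gives $\gamma\simeq a_1b_1a_2b_2\cdots a_mb_m$ in $|P|$. Finally, since $F_k^+$ and $F_{k+1}^-$ are glued faces with the gluing fixing $u$ and $p_k$, the path $b_k$ is the reverse of $a_{k+1}$ in $|P|$, so the loop telescopes to $a_1\overline{a_1}$ and is null-homotopic; hence $\gamma$ bounds in $|P|$.

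The step I expect to demand the most care is the coincidence of the $v_k$: this is what upgrades the purely local ``pillow'' assertion of Lemma~\ref{bad:lemma} to the global triviality of $\gamma$, and it relies essentially on the face gluings of $P$ being affine and on every triangle $T_k$ crossed by $\gamma$ being the boundary face of a simplex in $\Omega_1$, so that $\Delta_k$ is the cone over $T_k$ with apex $v_k$. A secondary, merely technical, point is that $|P|$ need not be a manifold near its edges and vertices; but the boundary faces $T_k$ are never glued, and all the homotopies used can be carried out inside the abstract simplices $\Delta_k$ (or inside a neighbourhood of a single $2$-face) and then pushed forward to $|P|$, so this introduces no real difficulty.
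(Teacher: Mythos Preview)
Your proof is correct and follows essentially the same strategy as the paper: both show that $\gamma$ is null-homotopic in $|P|$ by coning to a common apex via Lemma~\ref{bad:lemma}, and then invoke boundary irreducibility of $M$. The paper phrases the coning step more globally (the inclusion $W_i\hookrightarrow\bb|P|$ extends to a map from the cone $C(W_i)$ into $|P|$), while you carry it out explicitly along $\gamma$; as a minor aside, your appeal to asphericity of $B$ in the final step is superfluous, since triviality of the class of $f(\gamma)$ in $\pi_1(B)$ already means $f(\gamma)$ is null-homotopic in $B$.
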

\begin{proof}
Let $W_i$ be the component of $W$ containing
$\gamma$. 
Let $\{T_j\}_{j\in J}$ be the family of triangles of $\bb P$ that intersect $W_i$, 
and 
let $\Delta^3_j$ be the simplex in $P$ containing $T_j$. We denote by $v_j$ 
the vertex of $\Delta^3_j$ opposite to $T_j$, and set $\widehat{T}_j=T_j\cap W=T_j\cap W_i$.
Observe that by definition we have $W_i=\cup_{j\in J}\widehat{T}_j$.
Since each edge that intersects $W_i$ is bad, 
Lemma~\ref{bad:lemma} implies that all the vertices $\{v_j\}_{j\in J}$ coincide in $|P|$. This
implies in turn that the inclusion ${W_i}\hookrightarrow\bb |P|$ extends to 
an inclusion $C(W_i)\hookrightarrow |P|$
of the topological cone over $W_i$.
Since $C(W_i)$ is contractible, it follows that $\gamma$ is null-homotopic in $|P|$.
%Every edge that intersects $W_i$ is bad, so Lemma~\ref{bad:lemma}
%implies
%that the inclusion $ W_i\hookrightarrow \bb |P|$ extends to an inclusion
%$C(W_i)\hookrightarrow |P|$, where $C(W_i)$ is the topological cone over $W_i$. Since $C(W_i)$ is contractible, this implies
%in turn that the loop $\gamma$ is null-homotopic in $P$. 
Therefore, $f(\gamma)$ is null-homotopic in $M$, whence in $\partial M$
%Let $i\colon \bb |P|\to |P|$ and $j\colon \bb M\to M$ be the inclusions.
%Since $i(\gamma)$ is null-homotopic in $|P|$, the loop
%$j(f(\gamma))=f(i(\gamma))$ is null-homotopic in $M$. 
due to the boundary irreducibility
of $M$.
%this implies in turn that $f(\gamma)$ is null-homotopic in $\bb M$. 
\end{proof}

\begin{cor}\label{homotopic:cor}
 There exists a map $g\colon \bb |P|\to \bb M$ homotopic to $f|_{\bb |P|}$ and such that
$g|_{W_i}$ is constant for every $i=1,\ldots,r$. 
\end{cor}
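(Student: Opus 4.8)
The plan is to exploit the asphericity of $\bb M$: I will first show that each restriction $f|_{W_i}\colon W_i\to\bb M$ is homotopic to a constant map, then glue the resulting homotopies into a single homotopy $H\colon W\times[0,1]\to\bb M$ with $H(\cdot,0)=f|_W$ and $H(\cdot,1)$ constant on each $W_i$, and finally extend $H$ to a homotopy of $f|_{\bb|P|}$ over all of $\bb|P|$ by the homotopy extension property. The time-$1$ map of this extension will be the desired $g$.

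For the first step, fix $i$. Since $W_i$ is connected, $f(W_i)$ lies in a single connected component $B_i$ of $\bb M$, which is aspherical by our standing assumption. By Lemma~\ref{pi1inj:lemma}, for every loop $\gamma$ in $W_i$ the loop $f(\gamma)$ is null-homotopic in $\bb M$, hence in $B_i$; thus the homomorphism $\pi_1(W_i)\to\pi_1(B_i)$ induced by $f|_{W_i}$ is trivial (for any choice of basepoint, the condition being basepoint independent). Now $W_i$ is a compact surface with boundary, in particular a connected CW complex, and $B_i$ is a $K(\pi_1(B_i),1)$, so homotopy classes of maps $W_i\to B_i$ are detected by the induced conjugacy class of homomorphisms on fundamental groups (see \emph{e.g.}~\cite{Hat}). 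As this homomorphism is trivial, $f|_{W_i}$ is homotopic to a constant map into $B_i\subseteq\bb M$.

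For the last step, I would observe that $W$ is a compact subsurface of the closed surface $\bb|P|$ with boundary contained in the interior of $\bb|P|$, so after fixing a triangulation compatible with the decomposition $\bb|P|=N\cup W$ we may regard $(\bb|P|,W)$ as a CW pair; in particular the inclusion $W\hookrightarrow\bb|P|$ is a cofibration. The homotopy extension property then provides $\widehat H\colon\bb|P|\times[0,1]\to\bb M$ with $\widehat H(\cdot,0)=f|_{\bb|P|}$ and $\widehat H|_{W\times[0,1]}=H$. Setting $g:=\widehat H(\cdot,1)$ we obtain a map homotopic to $f|_{\bb|P|}$ with $g|_{W_i}=H(\cdot,1)|_{W_i}$ constant for every $i=1,\ldots,r$.

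The essential geometric input — that loops in $W$ become null-homotopic in $\bb M$ — has already been established in Lemma~\ref{pi1inj:lemma}, itself a consequence of Lemma~\ref{bad:lemma} and the boundary irreducibility of $M$, so the present corollary is a formal deduction and I do not expect a serious obstacle. The only points needing a little care are the possible disconnectedness of $\bb M$ (handled by arguing one component $B_i$ at a time, as above) and checking that $W\hookrightarrow\bb|P|$ is a cofibration so that the homotopy extension property applies, which is routine once a triangulation adapted to $\bb|P|=N\cup W$ is in place.
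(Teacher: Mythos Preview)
Your proof is correct and follows the same strategy as the paper: use Lemma~\ref{pi1inj:lemma} together with the asphericity of $\bb M$ to null-homotope each $f|_{W_i}$, then extend the resulting homotopy over all of $\bb|P|$. The only difference is that you invoke the homotopy extension property for the CW pair $(\bb|P|,W)$ abstractly, whereas the paper carries out the extension by hand, choosing collars $C(\gamma)\cong\gamma\times[0,1]$ of the components $\gamma$ of $\bb W$ inside $N$, setting $g$ equal to (a reparameterized copy of) $f|_N$ on the shrunken $N'=\overline{N\setminus\bigcup C(\gamma)}$, and filling each collar with the homotopy $H_i$; this is nothing more than an explicit realization of the HEP in this setting.
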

\begin{proof}
Each component $W_i$ of $W$ is a compact
orientable surface. Since $\bb M$ is aspherical, by Lemma~\ref{pi1inj:lemma} the map $f|_{W_i}$ can be homotoped to a constant
map $g_i\colon W_i\to \bb M$ via a homotopy $H_i\colon W_i\times [0,1]\to \bb M$
such that $H_i(x,0)=g_i(x)$ and $H_i(x,1)=f(x)$ for every $x\in W_i$. 
We now need to define a global map $g\colon \bb |P|\to \bb M$
such that $g|_{W_i}=g_i$ for every $i$.  

We piecewise define $g$ as follows.
For every component $\gamma$ of $\bb W$ we denote by $N_\gamma$ the component of $N$
containing $\gamma$.  We also fix a collar $C(\gamma)\cong \gamma\times [0,1]$ of $\gamma$ in $N_\gamma$, in such a way that $\gamma$ is identified with
$\gamma\times\{0\}\subseteq C(\gamma)$ and 
all the chosen collars are disjoint, and we set
$$
N'=\overline{N\setminus \bigcup_{\gamma\subseteq \bb W} C(\gamma)}\ .
$$ 
Of course, $N'$ is homeomorphic to $N$. More precisely, there exists a homeomorphism
$t\colon N\to N'$ such that the composition $N\to N'\hookrightarrow N$ is homotopic to the identity of $N$,
and $t(x,0)=(x,1)$ for every $x\in \gamma\subseteq C(\gamma)\cong \gamma\times [0,1]$, where $\gamma$ is any component
of $\bb N=\bb W$.
We  set $g|_{N'}=f|_{N}\circ t^{-1}$. It remains to 
properly define $g$ on the annuli $C(\gamma)$. To this aim, if $\gamma$ is any component of $\bb W_i$
and $(x,s)\in \gamma\times [0,1]\cong C(\gamma)$, then we set
$g (x,s)=H_i(x,s)$. It is easy to check that the resulting map $g$ is well-defined, continuous and homotopic to $f$.
\end{proof}

The following proposition provides the key step in the proof of Theorem~\ref{aspherical3:thm}.  We denote by $E_{\mathrm{nice}}$ the number of nice edges
of $\bb |P|$.

\begin{figure}
\begin{center}
\input{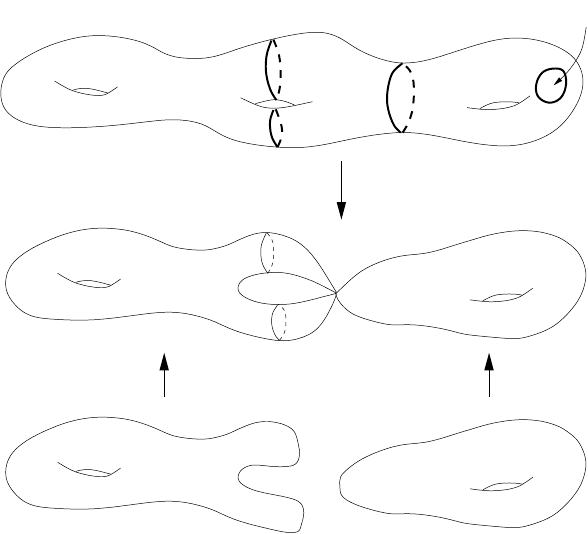_t}
\caption{The construction described in Proposition~\ref{irred:prop}.\protect\label{surfaces:fig}}
\end{center}
\end{figure}

\begin{prop}\label{irred:prop}
We have
$$
2\cdot E_\mathrm{nice}\geq   4\cdot \sharp\{ \mathrm{connected \ components \ of \ } \bb M \} + d\cdot \|\bb M\|   \ .
$$
\end{prop}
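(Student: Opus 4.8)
The plan is to produce, from the graph $\Gamma$ of nice edges together with the surfaces $W_i$, a new closed orientable surface whose simplicial volume we can compare on one hand with $d\cdot\|\bb M\|$ via Corollary~\ref{homotopic:cor} and Proposition~\ref{degree:prop}, and on the other hand with $E_{\mathrm{nice}}$ via an Euler characteristic count. First I would use Corollary~\ref{homotopic:cor} to replace $f|_{\bb|P|}$ by a map $g$ that is constant on each $W_i$. For each component $S_k$ of $\bb M$, let $\widehat S_k$ be the (possibly disconnected) subsurface of $\bb|P|$ mapped to $S_k$, and build a new closed surface $\widehat S_k^{\,\prime}$ by collapsing each $W_i\subseteq\widehat S_k$ to a point, or better, by capping off: since $g$ is constant on each $W_i$, the map $g$ factors (up to homotopy) through the quotient of $\bb|P|$ obtained by collapsing each $W_i$. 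The collapsed space is homotopy equivalent to $N$ with the boundary circles of each $W_i$ identified to points, i.e.\ to a space obtained from $N$ by coning off its boundary; replace it with a closed surface $\Sigma$ by gluing a disk along each boundary circle of $N$. The map $g$ then descends to a map $\bar g\colon \Sigma\to \bb M$ still of total degree $d$, since collapsing contractible (in $\bb M$) pieces and capping with disks does not change the degree onto the aspherical target $\bb M$.

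Next I would estimate $\|\Sigma\|$ from above via its Euler characteristic. The surface $\Sigma$ is built from $N=N(\Gamma)$, a regular neighbourhood of the graph $\Gamma$, by attaching one disk per boundary component of $N$. Hence $\chi(\Sigma)=\chi(N)+b$, where $b$ is the number of boundary circles of $N$; and $\chi(N)=\chi(\Gamma)=V-E_{\mathrm{nice}}$, where $V$ is the number of vertices of $\Gamma$. Using Proposition~\ref{vol:surf:prop}, $\|\Sigma\|\le -2\chi(\Sigma)=2E_{\mathrm{nice}}-2V-2b$ (on each component where this is nonnegative; the spherical and toral components contribute $0$, which only helps). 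On the other hand, $\|\Sigma\|\ge\|\bb M'\|$-type inequality: since $\bar g$ restricted to the part over $S_k$ has degree equal to the sum of the $d_k^j$'s, Proposition~\ref{degree:prop} gives $\|\Sigma\|\ge d\cdot\|\bb M\|$, and moreover, since each component of $\bb M$ receives a map of positive total degree from a \emph{closed} surface, each component of $\bb M$ forces at least one component of $\Sigma$ with $\chi\le -2$... wait, this is where I need to extract the summand $4\cdot\sharp\{\text{components of }\bb M\}$: for each component $S_k$ of $\bb M$, which is a surface of genus $\ge 2$ (since $\|\bb M\|>0$ in the relevant case, and each component is aspherical hence has negative Euler characteristic), the part of $\Sigma$ mapping onto it must itself have $-2\chi\ge d\|S_k\|\ge 2d\ge 2$, but more carefully one gets a surjection forcing $\chi\le\chi(S_k)\le -2$ on at least one component, contributing $\ge 4$ to $-2\chi$, hence the $4\cdot\sharp\{\text{components of }\bb M\}$ term. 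So putting these together,
$$
2E_{\mathrm{nice}}-2V-2b\ \ge\ \|\Sigma\|\ \ge\ d\cdot\|\bb M\| + 4\cdot\sharp\{\text{components of }\bb M\}\ ,
$$
and since $V\ge 0$ and $b\ge 0$ this yields the claimed inequality after possibly being more precise about which components of $\Sigma$ are being counted. Actually the cleanest route: show $\|\Sigma\|\ge d\|\bb M\|$ and separately that $\Sigma$ has, over each component $S_k$ of $\bb M$, at least one "large" component, and combine additively over components.

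The main obstacle I expect is the bookkeeping that separates the two contributions $d\cdot\|\bb M\|$ and $4\cdot\sharp\{\text{components of }\bb M\}$ without double-counting: one must simultaneously exploit that $\bar g$ has degree $d$ (giving the $d\|\bb M\|$ term by Proposition~\ref{degree:prop}) \emph{and} that $\bar g$ is surjective onto each component of $\bb M$ from a closed surface (each such surjection forces at least the Euler characteristic of that component, giving the $+4$ per component). The right statement is presumably: for each component $S_k$ of $\bb M$, the subsurface $\Sigma_k\subseteq\Sigma$ mapping onto $S_k$ satisfies $-2\chi(\Sigma_k)\ge 4 + d\|S_k\|$ — one extra unit of genus beyond what degree $d$ alone demands, coming from the fact that a degree-$d$ map between \emph{closed} surfaces $\Sigma_k\to S_k$ with $S_k$ of genus $g\ge 2$ satisfies $g(\Sigma_k)-1\ge d(g-1)$ by Kneser/Riemann–Hurwitz-type reasoning, but here I suspect the $+4$ comes instead from the capping disks or from the bad edges, so I would need to track carefully where the neighbourhoods $N_i$ and the collapsing introduce the extra components; this Euler-characteristic accounting, together with checking $\chi(N)=\chi(\Gamma)$ and that the degree is preserved under the collapse, is the part requiring genuine care. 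Everything else — homotoping $f$ via Corollary~\ref{homotopic:cor}, and the surface simplicial volume computation — is routine given the tools already assembled.
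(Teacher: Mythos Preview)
Your overall strategy---collapse the $W_i$, cap off $N$ to a closed surface $\Sigma=\bigsqcup S_i$, push the map through, and compare $\|\Sigma\|$ with $d\|\bb M\|$ via degree---is exactly the paper's. But two things go wrong. First, the inequality $\|\Sigma\|\le -2\chi(\Sigma)$ is in the wrong direction: spherical components have $\|S^2\|=0>-4=-2\chi(S^2)$, so in general $\|\Sigma\|\ge -2\chi(\Sigma)$, not $\le$. Second, and more seriously, you discard $V$ and $b$ (``since $V\ge0$ and $b\ge0$''), and this is precisely what loses the $+4$ term. None of your proposed sources for the $+4$ work: Kneser's degree inequality gives only $\|\Sigma_k\|\ge d\|S_k\|$, with no spare unit of genus, and ``surjection forces $\chi\le\chi(S_k)$'' is already subsumed by the degree estimate.

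The paper extracts the $+4$ from the very terms you throw away. Working componentwise on $\Gamma=\bigsqcup_{i=1}^s\Gamma_i$: each $\Gamma_i$ is a connected graph, hence has at least one vertex, and each $N_i$ has at least one boundary circle, so $\chi(S_i)\ge 1+\chi(N_i)=1+\chi(\Gamma_i)\ge 2-e_i$. Summing gives $E_{\mathrm{nice}}\ge 2s-\sum_i\chi(S_i)$, i.e.\ $2E_{\mathrm{nice}}\ge 4s-2\sum_i\chi(S_i)$. Now split: if $S_i\cong S^2$ then $4-2\chi(S_i)=0=\|S_i\|$, while if $S_i\ncong S^2$ then $4-2\chi(S_i)=4+\|S_i\|$. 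Hence $2E_{\mathrm{nice}}\ge 4\cdot\#\{i:S_i\ncong S^2\}+\sum_i\|S_i\|$. The degree argument (your $\bar g$) gives $\sum_i\|S_i\|\ge d\|\bb M\|$, and since each aspherical component of $\bb M$ must receive nonzero degree from some non-spherical $S_i$, you get $\#\{i:S_i\ncong S^2\}\ge\#\{\text{components of }\bb M\}$. So the $+4$ is combinatorial, coming from ``at least one vertex and one boundary circle per component of $\Gamma$''; once you keep those terms your argument goes through.
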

\begin{proof}
For every $i=1,\ldots, s$, let $S_i$ be the closed orientable
surface obtained from $N_i$ by collapsing to a point each connected component
of $\bb N_i$ (we understand that distinct components of $\bb N_i$ give rise to distinct points). If $e_i$ is the number
of nice edges of $\Gamma_i$, then
$$
\chi(S_i)\geq 1+\chi(N_i)=1+\chi(\Gamma_i)\geq 2-e_i\ .
$$
Summing over $i$, we obtain
$$
E_\mathrm{nice}\geq 2\cdot s -\sum_{i=1}^s \chi(S_i)\ .$$

Since for closed oriented surfaces $S$ the simplicial volume is equal to $\| S\| =-2\chi(S)$ unless $S$ is homeomorphic to the $2$-sphere in which case $\| S^2 \|=2-\chi(S^2)$, the latter inequality can be rewritten as
$$
2\cdot E_\mathrm{nice}\geq 4\cdot \sharp\{ i \mid S_i\ncong S^2 \} +\sum_{i=1}^s \|S_i\| \ .$$
It remains to show that 
$$  \sharp\{ i \mid S_i\ncong S^2 \} \geq  \sharp\{ \mathrm{connected \ components \ of \ } \bb M\} $$
and
$$ \sum_{i=1}^s \|S_i\| \geq d\cdot \|\bb M\|  \ . $$

Let $\widehat{S}$ be the space obtained from $\bb |P|$ by collapsing to a point each connected component
of $W$ (again, we understand that distinct components of $W$ give rise to distinct points), and let
us denote by $\pi\colon \bb |P|\to \widehat{S}$ the quotient map (see Figure~\ref{surfaces:fig}).
%We also set $\widehat{S}_i=\pi(N_i)\subseteq \widehat{S}$. 
We observe that
$S_i$ canonically projects onto $\widehat{S}$ and we denote by $p:\cup_{i=1}^s S_i\rightarrow \widehat{S}$ the resulting map.

Let us consider the map $g\colon \bb |P|\to \bb M$ provided by Corollary~\ref{homotopic:cor}. 
Being constant on the components of $W$, the map $g$ induces a map $\widehat{g}$ on $\widehat{S}$ and by precomposition by $p$ a map $\widehat{g}_S$ on $\cup_{i=1}^s S_i$ such that the following diagram commutes:
$$
\xymatrix{
\bb |P| \ar[rd]^g \ar[d]^\pi & \\
\widehat{S} \ar[r]_{\widehat{g}} & \bb M \\
\cup_{i=1}^s S_i \ar[ur]_{\widehat{g}_S}\ar[u]_p 
}
$$

The orientation on $\bb |P|$ induces an orientation on each
$N_i$, whence on each $S_i$, endowing $\cup_{i=1}^s S_i$ and also $\widehat{S}= p(\cup_{i=1}^s S_i)$ with a fundamental class. Since $g$ is homotopic to $f|_{\bb |P|}$,
 it follows from the commutativity of the previous diagram and the fact that $\pi_*([\bb |P| ])=[\widehat{S}]=p_*([\cup_{i=1}^s S_i])$ that
\begin{equation}\label{volS3}
 (\widehat{g}_S)_*([\cup_{i=1}^s S_i] )= \widehat{g}_*([\widehat{S}])=g_*([ \bb |P| ])= d\cdot [\bb M] \ , 
 \end{equation}
where the last equality is a consequence of Equation~(\ref{firstsimpl}). 
% We denote by $[\widehat{S}_i]\in H_2(\widehat{S}_i)$ the image of the real fundamental
%class of $S_i$ under the map induced on homology by the projection $S_i\to\widehat{S}_i$, and we set
%$$
%[\widehat{S}]=\sum_{i=1}^s (j_i)_*\left(\left[\widehat{S}_i\right]\right)\ \in\ H_2(\widehat{S})\ ,
%$$
%where $j_i\colon \widehat{S}_i\to \widehat{S}$ is the inclusion. Since continuous maps induce norm non-increasing
%maps in homology we have
%\begin{equation}\label{volS2}
%\| [\widehat{S}] \|\leq \sum_{i=1}^s \|S_i\|\ .
%\end{equation}
Thus, for every connected component $M_0$ of $\bb M$ there exists at least one $S_i$ not homeomorphic to $S^2$ and mapped to $M_0$ by $\widehat{g}_S$, proving the first desired inequality. 
Finally, since  $(\widehat{g}_S)_*$ is norm nonincreasing, we obtain from~\eqref{volS3} the second desired inequality
$$ d\cdot \|\bb M\| = \| (\widehat{g}_S)_*([\cup_{i=1}^s S_i] ) \| \leq  \sum_{i=1}^s \|S_i\| \ , $$
which finishes the proof of the proposition.\end{proof}

To conclude the proof of Theorem~\ref{aspherical3:thm} note that by definition every nice edge of $\bb |P|$ is contained in at least one simplex in $\Omega_0$. Moreover, by point (4) of Proposition~\ref{cyclehyp:prop}
every simplex in $\Omega_0$
has at most
two edges on $\bb |P|$, so the inequality $t_0\geq E_\mathrm{nice}/2$ holds. Putting this inequality together with Proposition~\ref{irred:prop}
and Inequality~\eqref{t1:equation} we get
$$
d (\|M,\bb M\|+\vare)\geq   \| z\|=t_0+t_1\geq d\cdot \frac{\|\bb M\|}{4}+ d\cdot \|\bb M\|=\frac{5d}{4}\| \bb M\|\ ,
$$
which proves the theorem since $\vare$ is arbitrary. 
%\qed

%\begin{prop}\label{stable:above:prop}
%For every $g\geq 2$ we have
%$$
%\sigma(M_g)\leq 10(g-1)+6 \ .
%$$
%\end{prop}
%\begin{proof}[Proof of Theorem~\ref{prod:surf:thm}]%\label{prod:surf:sub}
\subsection*{Proof of  Theorem~\ref{prod:surf:thm}}
We show that the $\Delta$-complexity of the product $M_g=S_g\times [0,1]$ of a surface of genus $g\geq 1$ with an interval is equal to
$$ \sigma(M_g)=10\cdot(g-1)+6\ . $$

For the inequality $\sigma(M_g)\leq 10\cdot(g-1)+6$ we exhibit a topological triangulation of $M_g$ with the prescribed amount of top dimensional simplices. 
Let us realize $S_g$ as the space obtained by gluing the sides of a $4g$-gon, as described in Figure~\ref{octagon:fig}--left, corresponding to the presentation 
$$ \langle x_1,\dots, x_{2g}\mid x_1\cdot \ldots \cdot x_{2g}\cdot (x_1)^{-1}\cdot \ldots \cdot (x_{2g})^{-1})=1\rangle$$
of $\pi_1(\Sigma_g)$ (it is easy to check the cellular structure induced on the quotient has exactly one vertex, so it must be homeomorphic to
$\Sigma_g$ by an obvious Euler characteristic argument). 

We decompose the polygon in $2g-2$ squares and $2$ triangles as indicated in Figure~\ref{octagon:fig}--right. 
We orient each of the edges of that decomposition so that the orientations match on edges being identified in $\Sigma_g$. 
In the product $\Sigma_g \times [0,1]$ a square will be triangulated in $5$ simplices so that for every oriented edge with vertices ${x,y}$, 
the product with $[0,1]$ is triangulated in two simplices by an edge from $\{(x,0)\}$ to $\{(y,1)\}$ if the orientation of the edge goes from $x$ to $y$
(see Figure~\ref{newcube:fig}).
 As for the two products of triangles with $[0,1]$, they can each be triangulated in $3$ simplices respecting the imposed triangulation of their boundaries
(see Figure~\ref{prysm:fig}). 
We have thus constructed a triangulation of $ M_g=S_g\times [0,1]$ in $(2g-2)\cdot 5 + 2\cdot 3$ tetrahedra.

%We denote by $p_i$ and $e_i$, $i=0,\ldots,4g-1$, respectively the vertices and the sides of the $4g$-gon, in such a way that
%$p_{i}$ and $p_{i+1}$ are the vertices of $e_i$.
%We subdivide the $4g$-gon into $4g-2$ triangles $T_1,\ldots,T_{4g-2}$ with $p_0$ as a common vertex, so that
%$e_{i}$ is the side of $T_i$ opposite to $p_0$
%for $i=1,\ldots,4g-2$. 

\begin{figure}[h!]
\begin{center}
\input{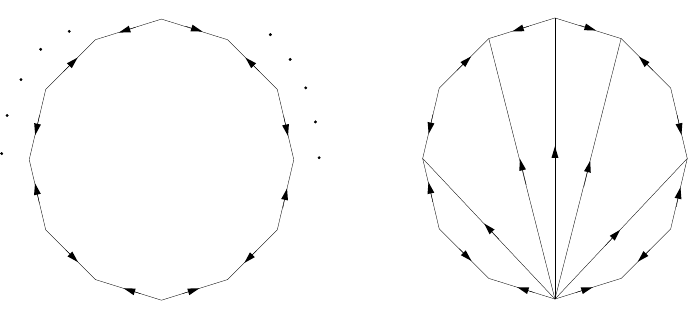_t}
\caption{The decomposition of $S_g$ into $2g-2$ squares and $2$ triangles, in the case $g=3$.\protect\label{octagon:fig}}
\end{center}
\end{figure}

%By taking the product with $[0,1]$,
%the triangulation of $S_g$ just described defines a decomposition of $M_g$ into $4g-2$ triangular prysms $Q_1,\ldots,Q_{4g-2}$
%such that $Q_i=T_i\times [0,1]$ for every $i=1,\ldots,4g-2$. 

%For $i=1,\ldots,4g-2$, let $V_i\subseteq Q_i$ be the tetrahedron 
%with vertices $(p_0,0)$, $(p_0,1)$, $(p_{i},1)$, $(p_{i+1},1)$ 
%and let $W_i=\overline{Q_i\setminus V_i}$ (see Figure~\ref{deco:fig}). Then, each $W_i$
%is a pyramid with rectangular basis $F_i=e_i\times [0,1]$ and vertex $(p_0,0)$.
%We are going to suitably decompose the union of the $W_i$'s in order to get a triangulation of $M_g$
%with $10g-4$ tetrahedra. 

\begin{figure}[h!]
\begin{center}
\includegraphics[width=12cm]{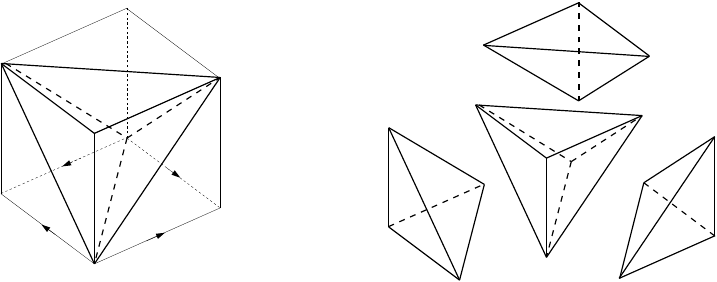}
\caption{Any orientation of the edges of a square $S$ with one source and one sink (on the left) determines a decomposition
of the cube $S\times [0,1]$ into 5 simplices (on the right).}
\protect{\label{newcube:fig}}
\end{center}
\end{figure}

%Let us first observe that the rectangles $F_0$ and $F_2$ are identified in $M_g$.
%We are going to keep the $V_i$'s in our final triangulation of $M_g$, and this 
%choice already defines a subdivision of $F_0$ into two triangles. This forces us to
%subdivide $W_2$ into two tetrahedra $W'_2,W''_2$
%in such a way that the induced triangulation of $F_2$ matches the triangulation of $F_0$.
%In the same way we subdivide  $W_{4g-3}$ into two tetrahedra $W_{4g-3}',W_{4g-3}''$ so to match
%the corresponding subdivision of $F_{4g-1}$.

%Let us consider the remaining pyramids $W_i$, $i\notin \{0,2,4g-3,4g-1\}$.
%These pyramids are glued in pairs along their rectangular faces. More precisely,
%if $e_i$ is paired to $e_j$ in the triangulation of $S_g$ we started with, where $i,j\notin \{0,2,4g-3,4g-1\}$, then
%$W_i$ is glued to $W_j$ along the identified faces $F_i=F_j$. We denote by $Z_{ij}$
%the octahedron that results from such a gluing. 
%We have thus decomposed $M_g$ into the union of $4g+2$ tetrahedra $V_1,\ldots,V_{4g-2}$, $W_2$, $W_2'$,
%$W_{4g-3}'$, $W_{4g-3}''$ and $2g-2$ octahedra. Each of these octahedra
%may be triangulated
 %by $3$ simplices as described in Figure~\ref{octa:tria:fig}, and this shows that $M_g$ may indeed
 %be triangulated with $10g-4$ tetrahedra.
 
 \begin{figure}
\begin{center}
\includegraphics[width=11cm]{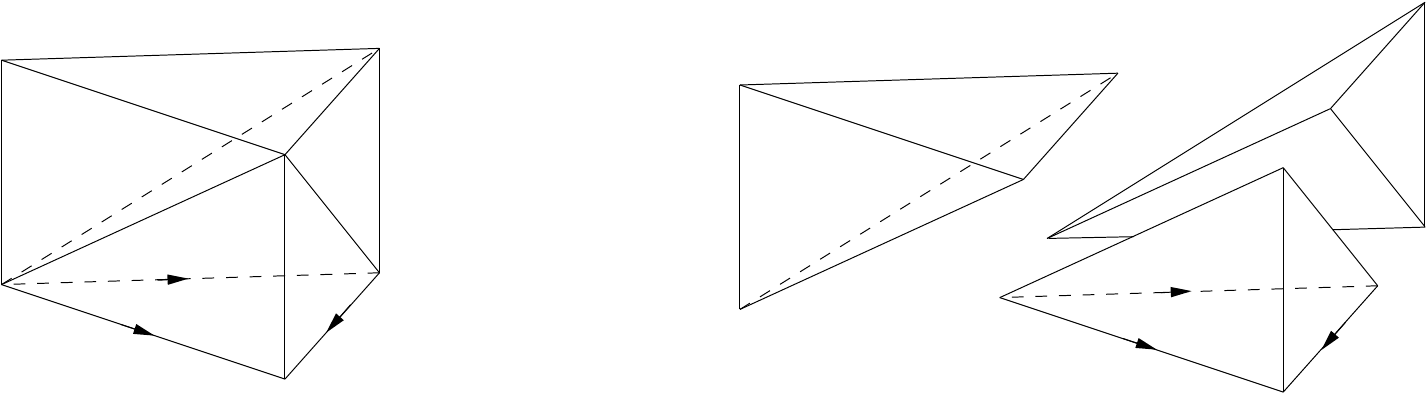}
\caption{A triangle $T$ with oriented edges determines a decomposition of the prysm $T\times [0,1]$ into
3 tetrahedra. \protect{\label{prysm:fig}}}
\end{center}
\end{figure}
 
It remains to prove the other inequality $\sigma(M_g)\geq 10\cdot(g-1)+6$.
Let $T$ be a triangulation of $M_g$ with simplices $\Delta^3_1,\ldots,\Delta^3_N$. 
We need to show that $N\geq 10(g-1)+6$.

We start by choosing a straightening operator on $(M_g,\partial M_g)$ with additional symmetry. To do so, we endow $\Sigma_g$ with a hyperbolic structure 
and consider on $\Sigma_g$ the barycentric straightening, 
associating to any singular simplex $\sigma\colon\Delta^q\rightarrow \Sigma_g$, for $q\geq 0$, 
a straightened simplex $\str_\mathrm{bar}(\sigma)$ (see \cite[Chapter 11]{Ratcliffe} for 
details). Note that the barycentric straightening 
has the property that it does not depend, in constant curvature, 
on the order of the vertices of $\sigma$ (in contrast to the straightening obtained by geodesic coning). 
Consider the resulting product straightening on $\Sigma_g\times [0,1]$, denoted by $\str$, 
where on the interval $[0,1]$ we consider the affine straightening. 
Note that $\str$ still has the property that it does not depend on the order of the vertices. 

For every $i=1,\ldots, N$, we fix an orientation-preserving parameterization
$\sigma_i\colon \Delta^3\to \Delta^3_i$, and we
say that a simplex of $T$ is \emph{inessential} if the image
of $\str(\sigma_i)$ lies on $\bb M_g$, and \emph{essential} otherwise. 
We order the simplices of $T$ so that $\Delta^3_1,\ldots,\Delta^3_{N_0}$ are
essential, and $\Delta^3_{N_0+1},\ldots, \Delta^3_N$ are not.
We now define an orientable pseudomanifold $P$ as follows. The simplices of $P$
bijectively correspond to the essential
simplices of $T$, and gluings in $P$ correspond to gluings between essential
simplices of $T$. (This does not mean that $P$ is identified with a subset of $M_g$, 
since, for example, two 
essential simplices of $T$ may share an edge because they are glued
to the same inessential simplex, so they may intersect in $M_g$, while being disjoint in $P$.)

We define a map $\str_T\colon |P| \to M_g$ 
which corresponds to the simultaneous straightening of all the essential simplices of $T$. To define the map $\str_T$ on $|P|$, we choose, for every $p\in |P|$, an $i\in\{ 1,\ldots,N_0\}$ 
such that $p\in |\Delta^3_i|\subseteq |P|$, and we set $\str_T ( p )=\str(\sigma_i)(q)$,
where $q\in \Delta^3_i\cong\Delta^3$ is any point which gets identified with $p$
under the natural map $\Delta^3_i\to |\Delta^3_i|\subseteq |P|$.
Of course, if the point $p$ 
belongs to the $2$-skeleton of $|P|$, then 
there may be several choices of $i$ and possibly also for $q\in\Delta^3$ (recall that distinct faces of $\Delta^3_i$ may be identified 
in $|P|$). However, since our straightening does not depend on the order of the vertices, one may easily check 
that $\str_T$ is indeed well-defined and continuous. Let us further see that $\str_T$ is a map of pairs
$$
\str_T\colon (|P|,\bb |P|)\to (M_g,\bb M_g)\ .
$$
If $F$ is any boundary face of $|P|$, then either $F$ corresponds to a boundary face of $T$, or 
it corresponds to a face of $T$ which is glued to an inessential simplex of $T$.
In the first case we deduce that $\str_T(F)\subseteq \bb M_g$ from the fact that straightening
preserves the space of singular simplices supported on $\bb M_g$. In the second case 
it is sufficient to observe that after straightening
faces of inessential simplices are supported on $\bb M_g$.

Let us analyze the action of $\str_T$ on fundamental cycles.
We first point out that, in general, we cannot assume that the sum $\sum_{i=1}^N \sigma_i$
is a relative cycle in $C_n(M_g,\bb M_g;\mathbb{Z})$. In fact, if two $2$-faces of $T$
are identified in $M_g$, then 
it may happen that the corresponding faces of the $\sigma_i$'s, when considered as singular
$2$-simplices, differ by the precomposition with a
nontrivial affine
automorphism of the standard $2$-simplex, and do not cancel each other in 
the algebraic boundary of $\sum_{i=1}^N \sigma_i$.
This problem can be fixed by alternating each $\sigma_i$ as follows. 
For any singular $3$-simplex $\sigma$ we define the chain 
$$
\alt(\sigma)=\frac{1}{(4)!}\sum_{\tau\in \mathfrak{S}_{4}} (-1)^{{\rm sgn}(\tau)}\sigma\circ \overline{\tau},
$$ 
where $\overline{\tau}$ is the unique affine diffeomorphism of the
standard $3$-simplex $\Delta^3$ corresponding to the permutation $\tau$ of the vertices of $\Delta^3$. 
Now it is immediate that the real chain 
$z_M^\mathbb{R}=\alt(\sigma_1)+\ldots+\alt(\sigma_N)$ is a cycle which represents the relative
fundamental class of $M_g$.
(Note in passing that this construction may be exploited to prove
the inequality $\| M,\bb M\|\leqslant \sigma (M)$ stated in the introduction, which holds for every 
$3$-manifold $M$). 
Since we know that the straightening operator induces the identity on homology, 
the cycle $\str(z_M^\mathbb{R})$ also 
represents the relative
fundamental class of $M_g$.

The cycle $\str(z_M^\mathbb{R})$ can be realized as the push-forward of a relative
cycle in $C_n(|P|,\bb |P|;\mathbb{R})$ via $\str_T$. To see this, 
we denote by $\hat{\sigma}_i\colon \Delta^3\to |P|$ the singular simplex
corresponding to $\sigma_i\colon \Delta^3\to M_g$, $i=1,\ldots,N_0$. Here also, the sum
of the $\hat{\sigma}_i$'s does not provide in general an integral relative cycle for $(|P|,\bb |P|)$,
so we have to recur to the real relative cycle 
$z_P^\mathbb{R}=\alt(\hat{\sigma}_1)+\ldots+\alt (\hat{\sigma}_{N_0})$. 
By contruction, the chains $(\str_T)_*(z_P^\mathbb{R})$ and 
$\str(z_M^\mathbb{R})$ differ just by a linear combination of simplices supported on $\bb M_g$.
As a consequence, they define the same element of $C_n(M_g,\bb M_g;\mathbb{R})$, so
$(\str_T)_*(z_P^\mathbb{R})$ is a relative fundamental cycle of $M_g$. 

Since $P$ is an orientable $3$-dimensional pseudomanifold, every connected component
of $\bb |P|$ is a closed orientable surface. If we denote by $[\bb P]$ the sum of the real
fundamental classes of the components of $\bb |P|$, then our previous considerations imply that
$(\str_T)_*([\bb P])$ is equal to the sum of the fundamental classes
of the boundary components $\bb_0 M_g$, $\bb_1 M_g$ of $M_g$. In particular, for $i=0,1$,
there exist components $\bb_i |P|$ of $\bb |P|$ such that the restriction
$\str_T|_{\bb_i |P|}\colon \bb_i |P|\to \bb_i M_g$ has positive degree. This implies
that %$\|\bb_i P\|\geq \|\bb_i M_g\|$, so 
$g_i\geq g$, where $g_i$ is 
the genus of $\bb_i |P|$. 
%(recall that $g\geq 1$).

As usual, we denote by $t_j$ the number of simplices of $P$ with exactly $j$ boundary faces,
and we recall that $t_2=t_3=t_4=0$, so that $N_0=t_0+t_1$.
Since the $\Delta$-complexity of a closed surface of genus $\overline g$ is $4\overline{g}-2$ we get
\begin{equation}\label{t1:c}
t_1\geq (4g_0-2)+(4g_1-2)\geq 8g-4\ .
\end{equation}

Just as in the computations leading to Theorem~\ref{aspherical3:thm}, 
we now need to bound $t_0$ from below. To this aim we exploit Proposition \ref{irred:prop} with $d=1$. 
Actually, the proposition is stated for pseudomanifolds associated to integral cycles, but the proof carries through without changes in our present setting of a pseudomanifold $P$ with a map of pairs
$$
\str_T\colon (|P|,\bb |P|)\to (M_g,\bb M_g)\ 
$$
sending the real fundamental class of $(|P|,\bb |P|)$ to the real fundamental class of $(M_g,\bb M_g)$. Keeping notations and terminology from above,
we denote by $E_{\mathrm{nice}}$ the number of nice edges of $\bb |P|$,
and we recall that 
$t_0\geq E_{\mathrm{nice}}/2$. 
Since $M_g$ has two boundary components,
Proposition~\ref{irred:prop} implies that
$$
t_0\geq \frac{E_{\mathrm{nice}}}{2}\geq \frac{8+\| \bb M_g \|}{4}=2g\ .
$$
Putting together this inequality with Inequality~\eqref{t1:c} we get 
$$
N\geq N_0=t_0+t_1\geq 2g+8g-4=10g -4\ ,
$$
which concludes the proof.
%\end{proof}

\subsection*{Proof of Corollary~\ref{cor: prod surf}}
%\begin{proof}[Proof of Corollary~\ref{cor: prod surf}]
Using that the stable $\Delta$-complexity bounds the simplicial volume from above and applying Theorem~\ref{aspherical3:thm}, we have the inequalities
$$
\sigma_\infty(M_g)\geq \|M_g,\bb M_g\|\geq \frac{5}{4}\|\bb M_g\|=10(g-1)\ .
$$
It remains to see that the stable $\Delta$-complexity of $M_g$ is smaller than or equal to $10(g-1)$. For every $d\geq 2$ the manifold $M_g$ admits a covering of degree $d$ whose total space is
homeomorphic to $M_{g'}$, where $g'=d(g-1)+1$. By Theorem~\ref{prod:surf:thm}, this implies
that
$$
\sigma_\infty(M_g)\leq \frac{\sigma (M_{d(g-1)+1})}{d}\leq \frac{10d(g-1)+6}{d}=10(g-1)+\frac{6}{d}\ .
$$
Since $d$ is arbitrary, the corollary is proved.
\bibliographystyle{amsalpha}
\bibliography{biblio}

\end{document}